\newcommand{\e}{\epsilon}
\numberwithin{equation}{section}
\newtheorem{Theorem}{Theorem}[section]
\newtheorem{Proposition}{Proposition}[section]
\newtheorem{Lemma}{Lemma}[section]
\newtheorem{Assumption}{Assumption}
\newtheorem{Corollary}{Corollary}[section]
\theoremstyle{definition}
\theoremstyle{remark}
\newtheorem{Remark}{Remark}[section]
\DeclareMathOperator{\im}{Im}
\newcommand{\D}{\mathrm {d}}
\DeclareMathOperator{\tr}{Tr}
\DeclareMathOperator{\re}{Re}
\global\let\AddToReset=\@addtoreset}
\newcommand{\N}{{\mathbb N }}
\newcommand{\I}{\mathrm{i}}
\newcommand{\R}{{\mathbb R}}
\newcommand{\E}{\mathrm{e}}
\newcommand{\ie}{{\sl i.e.\/ }}
\newcommand{\cf}{{\sl cf.\/ }}
\newcommand{\eg}{{\sl e.g.\/}}
\newcommand{\be}{\begin{equation}}
\newcommand{\ee}{\end{equation}}
\def\v{{\bf v}}
\def\({\left(}
\def\){\right)}
\def\<{\left\langle}
\def\>{\right\rangle}
\def\O{\mathcal O}
\newcommand{\Id}[1]{{\rm I\kern-2pt I_{#1}}}
\renewcommand{\hbar}{{\displaystyle\bar{\phantom{x}}\kern-6pt h}}
\title[Rotating Superfluids]
{Rigorous derivation of the hydrodynamical equations for rotating superfluids}
\author{Hailiang Liu}
\address{Department of Mathematics, Iowa State University, Ames, IA 50011-2064}
\email{hliu@iastate.edu}
\author[C. Sparber]{Christof Sparber}
\address{Wolfgang Pauli Institute Vienna, Nordbergstra\ss e 15, A-1090 Vienna, Austria and
 DAMTP, Centre for Mathematical Sciences, Wilberforce Road, Cambridge CB3 0WA,
UK.}
\email{christof.sparber@univie.ac.at}
\thanks{C.~S. has been supported by the APART grant of the Austrian Academy of Sciences.
H.~L. was partially supported by the National Science
Foundation under Grant DMS05-05975.}
\keywords{Semi-classical asympotics, nonlinear Schr\"odinger equation, Bose-Einstein condensates, rotational superfluids,
Thomas-Fermi limit}
\begin{document}

\begin{abstract}
Using a modified WKB approach, we present a rigorous semi-classical analysis for solutions
of nonlinear Schr\"{o}dinger equations with rotational forcing.
This yields a rigorous justification for the
hydrodynamical system of rotating superfluids.
In particular it is shown that global-in-time semi-classical convergence
holds whenever the limiting hydrodynamical system has global smooth solutions
and we also discuss the semi-classical dynamics of several physical quantities describing
rotating superfluids.
\end{abstract}

\maketitle


\section{Physical Motivation}

\emph{Bose-Einstein condensates} (BECs)
play an outstanding role in present-day physics, \cf \cite{PiSt} for a general introduction.
Understanding and controlling their behavior is of great fundamental importance and essential for novel applications.
A particular focus of interest is on dynamical phenomena related to
\emph{superfluidity} and the creation of \emph{quantized vortices}, see, \eg,
\cite{Af, CD, RBD:2002, Do}. To this end the typical experimental
set-up is based on a trapping potential subject to a \emph{rotational
forcing}.
The mathematical description of this system is then usually
given by the celebrated \emph{Gross-Pitaevskii equation} (GPE), a
mean field model for the macroscopic wave function of the
condensate (see \cite{ESY} for a rigorous derivation).
In the rotating reference frame we are consequently led to
\begin{equation}\label{gpe}
\I \hbar \, \partial_t \psi = -\frac{\hbar^2}{2m} \, \Delta \psi
+V_0(x) \psi + \kappa |\psi|^2 \psi +\I  \Omega_0 x^\bot \cdot
\nabla_x\psi,
\end{equation}
with a nonlinear coupling constant $\kappa=N 4\pi \hbar a/m$, where $N$ is the number of
particles forming the condensate and $m$, $a\in \R$ respectively denote their corresponding mass and scattering
length. In the context of BECs the potential $V$ is usually assumed to be a harmonic oscillator confinement, \ie
$V_0(x)= \frac{1}{2}m \omega^2_0 | x|^2$, for some
$\omega_0 \in \R^d$. In \eqref{gpe} we also write $x^\bot= (x_2, -x_1, 0)^\top$ in $d= 3$ spatial dimensions and
analogously $x^\bot= (x_2, -x_1)^\top$ for $d= 2$. The latter case is motivated by recent experiments for
BECs which are strongly confined in one or
two directions.
Thus $x^\bot \cdot
\nabla_x$ can be interpreted as
the negative $x_3$-component of the quantum mechanical \emph{angular momentum operator}
$L= x\times (-\I \nabla_x)$ and $\Omega_0$ is the corresponding angular velocity.

In the so-called \emph{Thomas-Fermi limit}, which typically applies for large systems,
the dynamics is presumably well described by the
\emph{hydrodynamical equations for rotating superfluids} \cite{AfDu, RZS, ZS}, \ie
\begin{equation}\label{super0}
\left \{
\begin{aligned}
& \partial_t \rho +\nabla_x \cdot \big(\rho (\v  -\Omega_0 x^\bot)\big)=0,\\
& \partial_t \v + \nabla_x \left( \frac{|\v|^2}{2}  -\Omega_0 x^\bot \cdot \v + V_0 + \rho \right)=0,
\end{aligned}
\right.
\end{equation}
where $\rho:=|\psi|^2$ denotes the particle density and $\v$ the corresponding superfluid velocity defined by
$$
\v:= \frac{\hbar}{m } \frac{\im \left(\overline \psi \, \nabla_x \psi \right)}{|\psi|^2}.
$$
The passage from \eqref{gpe} to \eqref{super0} is usually explained by using the classical
\emph{Madelung transformation} of the wave function, where one writes
\be \label{mad}
\psi (t,x)= \sqrt{\rho(t,x)} \, \exp\left(\I \Phi(t,x)/\hbar \right),
\ee
and consequently identifies $\v:= \nabla_x \Phi$. Formally plugging the ansatz \eqref{mad} into \eqref{gpe},
separating real and imaginary parts,
and discarding terms $\propto \hbar^2$ yields \eqref{super0}. This asymptotic regime
is particularly interesting for numerical simulations, \cf \cite{BDZ:2006, BaWa}, and we also
refer to \cite{GLM, Car2} for an extensive review of such dispersive limits.
However, the representation \eqref{mad} makes no sense at vacuum points, where $\rho =0$.
Indeed the system \eqref{super0} degenerates at such points
and is therefore only weakly hyperbolic (see, \eg, \cite{GLM}).

In the present work we shall rigorously prove that \eqref{super} approximates \eqref{gpe} in a certain sense and moreover
draw some further conclusions from that. To this end we rescale equation \eqref{gpe}, as described in \cite{BJM, Liu:2006}, into its
dimensionless form. This yields
\begin{equation}\label{sgpe}
\I \epsilon \, \partial_t \psi^\epsilon = -\frac{\epsilon^2}{2} \, \Delta \psi^\epsilon
+V(x) \psi^\epsilon + \e \delta^{5/2}|\psi^\epsilon|^2 \psi^\epsilon +\I \epsilon \, \Omega x^\bot \cdot
\nabla_x\psi^\epsilon,
\end{equation}
with $\e= \hbar/(m L^2)$ and $\delta = 4\pi a N/a_0$, where $L$ denotes the characteristic length of the condensate and
$a_0= \sqrt{\hbar/(m\omega_0)}$ is the ground state length of the harmonic oscillator potential $V(x)$.
From now on $\Omega$ and $V(x)$ denote (rescaled) dimensionless quantities.
The particular choice $|\e \delta^{5/2}|=\O(1)$, which yields
$a_0 \ll L$ and thus $\e =(a_0/4\pi |a|N)^{1/5} \ll 1$, corresponds to the Thomas-Fermi regime of
strong-interactions, \cf \cite{BJM} for more details. Note however that our scaling is different
from the one used in \cite{Af, AfDu, CRDY}.

We consequently study the following \emph{semi-classically scaled} nonlinear Schr\"odinger equation (NLS)
with rotational forcing
\begin{equation}\label{se}
\left \{
\begin{aligned}
\I \epsilon \, \partial_t \psi^\epsilon = & \ -\frac{\epsilon^2}{2} \, \Delta \psi^\epsilon
+V(x) \psi^\epsilon +  f(|\psi^\epsilon|^2) \psi^\epsilon +\I \epsilon \, \Omega x^\bot \cdot
\nabla_x\psi^\epsilon\\
\psi ^\e \big|_{t=0} = & \ \psi^\e_{\rm in}(x),
\end{aligned}
\right.
\end{equation}
where $t \in \R$, $x \in \R^d$, for $d=2,3$, and $\Omega \geq 0$, some given constant (independent of $\e \ll 1$).
The energy functional associated to \eqref{se} reads
\be
\label{energy}
E(\psi^\epsilon)=\int_{\R^d}
\frac{\epsilon^2}{2} \, |\nabla_x \psi^\epsilon|^2
+ \left(V(x) + F(|\psi^\epsilon|^2)\right) |\psi^\epsilon|^2 + \I \epsilon \, \Omega \overline {\psi^\epsilon}
x^\bot \cdot \nabla_x \psi^\e  \ \D x,
\ee
where $F$ is the primitive of $f$. From now
on we impose the following
assumptions.
\begin{Assumption} \label{ass} It holds:
\begin{itemize}
\item The nonlinearity satisfies $f \in C^\infty(\R)$ such that $f'>0$.
\item The potential $V$ is of the form $V(x)= \frac{1}{2} |\omega \cdot x|^2$, with $\omega \in \R^{d}$.
\end{itemize}
\end{Assumption}
The superfluid equations corresponding to the NLS \eqref{se} are thus given by
\begin{equation}\label{super}
\left \{
\begin{aligned}
& \partial_t \rho +\nabla_x \cdot \big(\rho (\v  -\Omega x^\bot)\big)=0,\\
& \partial_t \v + \nabla_x \left( \frac{|\v|^2}{2}  -\Omega x^\bot \cdot \v + V + f(\rho) \right)=0.
\end{aligned}
\right.
\end{equation}

We expect the system \eqref{super} to be a valid description of solutions to \eqref{se} in the limit $\e \to 0$.
In order to prove this rigorously, we will heavily rely on semi-classical expansion techniques first developed
by Grenier \cite{Gre} and later extended by Carles in \cite{Car}.
The latter work in particular treats the case of a harmonic confinement and
nonlinearities of different strength (see also \cite{Car1} for an extension to
higher order nonlinearities and \cite{DLT} for results on the derivative NLS).
We can thus focus on the role played by the rotational forcing.
Indeed the motivation for our work is threefold: First, we aim to
generalize the results of \cite{Car} to the case where a rotational term is included
and consequently give a rigorous justification of \eqref{super}.
Second we strengthen these results a bit in the sense of Corollary \ref{th2.2} below. Finally
we aim to describe the dynamical features of rotational BECs from the
semi-classical point of view, as given in Theorem \ref{th2.3}.

\section{The modified WKB-approach}

As noticed in \cite{Car, Gre}, the classical Madelung
transformation \eqref{mad} is not well suited to rigorously derive the
semi-classical asymptotics of $\psi^\e$. Rather one is led
to consider a \emph{modified} version of it. To this end one
writes the \emph{exact solution} to \eqref{se} in the form
\begin{equation}\label{modWKB}
\psi^\epsilon (t,x) =a^\epsilon(t, x)\E^{\I \Phi^\e(t,x)/\e},
\end{equation}
where from now on the ``amplitude'' $a^\e$ is allowed to be
\emph{complex-valued}. Moreover $a^\e$ as well as the  (real-valued) phase
$\Phi^\e$ are assumed to admit an asymptotic expansion of the
form
\begin{equation} \label{exp}
a^\epsilon\sim a+ \e a_1 + \e^2
a_2+ \cdots, \quad \Phi^\epsilon\sim \Phi+ \e \Phi_1 + \e^2
\Phi_2+ \cdots.
\end{equation}
Since $a^\e$ is complex
valued, the phase $\Phi^\e$ can be seen as an \emph{additional}
degree of freedom introduced as a \emph{multiple scales
representation} for $\psi^\e$. In any case, the ansatz \eqref{modWKB} should not be
confused with \eqref{mad} and in particular it has nothing to do with a rewriting of
$\psi^\e$ into polar coordinates. The main gain of this
modified WKB-approach is that it yields a \emph{separation of
scales} within the appearing fast, \ie $\e$-oscillatory, phases and
slowly varying phases, which eventually can be included in the (complex-valued)
amplitudes.

An analysis based on WKB-type methods necessary requires admissible initial data.
To this end we introduce the following definition: A function
$f \in C^\infty(\R^d)$ is said to be \emph{sub-quadratic}, if
\begin{equation}\label{sq}
\forall \, \alpha \in \N^d, |\alpha| \geq 2: \sup_{x\in \R^d}|\partial^\alpha_x f| < \infty.
\end{equation}
We consequently impose:
\begin{Assumption} \label{ass1}
The initial data is of the form
\begin{equation}\label{initial}
\psi^\e_{\rm in} (x) =a^\epsilon_{\rm in}(x)\, \E^{\I \Phi_{\rm in}(x)/\e}.
\end{equation}
Here $a^\epsilon_{\rm in}$ is complex-valued and admits an asymptotic expansion in $\e$ whereas
$\Phi_{\rm in}(x)$ is $\e$-independent, real-valued, and sub-quadratic.
\end{Assumption}
It is important to note that Assumption \ref{ass1} imposes a particular $\e$-oscillatory
structure on the initial data but does \emph{not} yield
any problems at vacuum points since $a_{\rm in}^\e$ is allowed to be complex-valued.
\begin{Remark} \label{rem}
In particular we are free to choose, say in $d=2$ spatial dimensions, an initial data of the form:
$\Phi_{\rm in} (x) = 0$, $a^\e_{\rm in} (x) = \chi(r) \E^{ \I m \theta}$, where we have used polar
coordinates. Here $m \in \N_0$ is the so-called
\emph{winding number}. For $m \not = 0$ and $\chi$ appropriately chosen (see, \eg,
\cite{BDZ:2006, BMW, BKJZ}) this allows for so-called \emph{vortex states} as initial data.
We also note that we could allow for more general initial phase function
$\Phi_{\rm in}^\e$ which admit an asymptotic expansion in powers of $\e$ (analogously to $a_{\rm in}^\e$).
For simplicity we do nit treat this case though.
\end{Remark}
Upon substituting \eqref{modWKB} into the NLS (\ref{se}),
we have the freedom to split the appearing terms into
\begin{equation}\label{wkb}
\left \{
\begin{aligned}
& \partial_t\Phi^\e +\frac{1}{2}|\nabla_x \Phi^\e|^2 +V(x)-\Omega
x^\bot \cdot \nabla_x \Phi^\e
+f(|a^\e|^2)=0, \\
&\partial_t a^\e +\nabla_x \Phi^\e \cdot \nabla_x a^\e
+\frac{1}{2}a^\e \Delta \Phi^\e -\Omega x^\bot \cdot \nabla_x a^\e
=\frac{\I \epsilon}{2}\Delta a^\e.
\end{aligned}
\right.
\end{equation}
This system is \emph{equivalent} to the nonlinear
Schr\"odinger equation \eqref{se}. In particular it does no longer
represent a splitting into real and imaginary parts (since $a^\e$ is
complex-valued), in contrast Madelung's original approach. Moreover the
system \eqref{wkb} is seen to be perturbed by a term which is ``only'' $\O(\e)$ (since the
term $\propto \Delta a$ now appears in the equation for the amplitude instead of the
one for the the phase).

As $\e \to 0$, the corresponding \emph{limiting WKB system} is
then (formally) found to be
\begin{equation}\label{phia}
\left \{
\begin{aligned}
& \partial_t\Phi +\frac{1}{2}|\nabla_x  \Phi|^2 +V(x)-\Omega x^\bot
\cdot \nabla_x \Phi +f(|a|^2)=0, \\
&\partial_t a +\nabla_x\Phi \cdot \nabla_x a +\frac{1}{2}a \Delta \Phi
-\Omega x^\bot \cdot \nabla_x a =0.
\end{aligned}
\right.
\end{equation}
From here, setting $\rho = |a|^2$ and $\v = \nabla_x \Phi$, one (again
formally) obtains a hydrodynamical equations of superfluids \eqref{super}.
Indeed we shall prove below that \eqref{phia} and \eqref{super} are in a certain sense equivalent.

\begin{Remark} From the point of view of geometrical optics, the above given
limit corresponds to the \emph{supercritical case}, \cf \cite{Car, Car1, Car2} for more details.
Note however that a rotational forcing is neither considered in \cite{Car, Car1}, nor in \cite{GLM}.
Also note that in the case of a \emph{linear} Schr\"odinger equation the corresponding system \eqref{phia}
would be decoupled since the first equation would be replaced by the classical \emph{rotational
Hamilton-Jacobi equation} (HJ)
\begin{equation}\label{HJ}
    \partial_t S + \frac{1}{2}|\nabla_x S|^2 +V(x)-\Omega
    x^\bot\cdot \nabla_x S=0.
\end{equation}
Equations of the form \eqref{HJ} have been extensively studied in \cite{Liu:2006}, where several
qualitative properties for the corresponding solutions are established.
\end{Remark}

As a first step in our analysis we shall show that the usual hydrodynamical system \eqref{super}
and the WKB system \eqref{phia} admit smooth solutions on the same time-intervall.

\begin{Lemma}[Equivalence] \label{th2.1}
Let $T$ be the maximal time of existence for a smooth solution
$(\rho, \v)$, with $\rho\geq 0$, of the hydrodynamical system \eqref{super} and
let $T^{*}$ be the maximal existence-time of a smooth solution $(a, \Phi)$ of \eqref{phia}.
Then we have $T=T^{*}$.
\end{Lemma}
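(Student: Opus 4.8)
The plan is to prove the two inclusions $T^{*} \leq T$ and $T \leq T^{*}$ by passing explicitly between the two formulations, which amounts to showing that (\ref{phia}) and (\ref{super}) are essentially the same system once one keeps track of the phase-versus-velocity bookkeeping and the possible vanishing of $\rho$. The first direction is the easy one: given a smooth solution $(a,\Phi)$ of (\ref{phia}) on $[0,T^{*})$, set $\rho := |a|^2$ and $\v := \nabla_x \Phi$. Multiplying the amplitude equation in (\ref{phia}) by $\overline a$, adding the conjugate, and using $\partial_t |a|^2 = 2\,\re(\overline a\,\partial_t a)$ gives the continuity equation $\partial_t \rho + \nabla_x\cdot(\rho(\v - \Omega x^\bot)) = 0$; here the term $\frac12 a\Delta\Phi$ contributes $\rho\,\Delta\Phi = \rho\,\nabla_x\cdot\v$ and the transport terms assemble the divergence. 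Taking $\nabla_x$ of the phase (eikonal) equation in (\ref{phia}) and using $\nabla_x(x^\bot\cdot\nabla_x\Phi) = \nabla_x(x^\bot)\,\v + (x^\bot\cdot\nabla_x)\v$, one recovers exactly the second equation of (\ref{super}) — note that $\v = \nabla_x\Phi$ is automatically curl-free, so $\frac12\nabla_x|\v|^2 = (\v\cdot\nabla_x)\v$ is legitimate. Since $a \in C^\infty$, $\rho = |a|^2$ is smooth and $\geq 0$, so $(\rho,\v)$ is an admissible smooth solution of (\ref{super}) on $[0,T^{*})$, whence $T^{*} \leq T$.

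For the reverse inclusion $T \leq T^{*}$, I would start from a smooth solution $(\rho,\v)$ of (\ref{super}) on $[0,T)$ with $\rho \geq 0$ and manufacture a solution of (\ref{phia}). First recover the phase: since the second equation of (\ref{super}) can be written $\partial_t\v + \nabla_x(\,\cdot\,) = 0$ and $\v_{\rm in} = \nabla_x\Phi_{\rm in}$ (from Assumption \ref{ass1}), the vector field $\v$ stays a gradient for all time, so there is a unique (up to a spatial constant, fixed by a time-dependent normalization) smooth $\Phi$ with $\nabla_x\Phi = \v$; define it by $\Phi(t,x) := \Phi_{\rm in}(x) - \int_0^t \big(\tfrac12|\v|^2 - \Omega x^\bot\cdot\v + V + f(\rho)\big)(s,x)\,\D s$, and check that $\nabla_x$ of this formula reproduces $\v$ using the $\v$-equation, and that $\Phi$ then solves the eikonal equation in (\ref{phia}) by construction. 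The more delicate point is recovering a \emph{smooth complex} amplitude $a$ with $|a|^2 = \rho$ even where $\rho$ vanishes; one cannot simply take $a = \sqrt\rho$. Instead, with $\Phi$ in hand, the second equation of (\ref{phia}) is a \emph{linear} transport equation for $a$, namely $\partial_t a + (\nabla_x\Phi - \Omega x^\bot)\cdot\nabla_x a = -\tfrac12(\Delta\Phi)\,a$, with smooth coefficients on $[0,T)$ (since $\Phi \in C^\infty$ there); solving it by characteristics with initial datum $a_{\rm in} = a^0_{\rm in}$ (the leading term of the complex data in Assumption \ref{ass1}) produces a unique smooth complex-valued $a$ on $[0,T)$. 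It then remains to verify $|a(t,x)|^2 = \rho(t,x)$: both $|a|^2$ and $\rho$ satisfy the \emph{same} scalar transport equation $\partial_t u + \nabla_x\cdot(u(\nabla_x\Phi - \Omega x^\bot)) = 0$ with the same initial data $|a_{\rm in}|^2 = \rho_{\rm in}$, so by uniqueness for that linear equation they coincide — and in particular $f(|a|^2) = f(\rho)$, which is what makes the eikonal equation for $\Phi$ close up into (\ref{phia}). Hence $(a,\Phi)$ is a smooth solution of (\ref{phia}) on $[0,T)$ and $T \leq T^{*}$.

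Combining the two inclusions gives $T = T^{*}$. The main obstacle, and the only step that is more than bookkeeping, is the construction of the smooth complex amplitude across the vacuum set $\{\rho = 0\}$: the point is precisely that in (\ref{phia}) the amplitude solves a linear transport equation whose coefficients depend only on $\Phi$ (hence are smooth wherever the hydrodynamic solution is smooth), so $a$ never sees the degeneracy of $\rho$, and the identification $|a|^2 = \rho$ is then forced by uniqueness of solutions to the continuity equation rather than extracted by an (ill-defined) square root. A secondary technical point to be careful about is that both constructions preserve the relevant regularity class and the sub-quadratic structure of the phase — the harmonic potential $V = \tfrac12|\omega\cdot x|^2$ and the linear-in-$x$ rotation term keep $\Phi$ sub-quadratic and its second derivatives bounded, so no growth-at-infinity issue spoils the equivalence — but this is routine given Assumptions \ref{ass} and \ref{ass1}.
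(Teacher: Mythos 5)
Your proof is correct and follows essentially the same route as the paper's: in both, the forward direction is the observation that $(|a|^2,\nabla_x\Phi)$ solves \eqref{super} plus uniqueness, and the reverse direction hinges on solving the \emph{linear} amplitude transport equation (smooth coefficients coming from $\v$, hence no trouble at vacuum) and then identifying $|a|^2=\rho$ via uniqueness for the continuity equation, with $\Phi$ recovered by time integration at the end. The only cosmetic difference is that the paper splits $a$ into real and imaginary parts and transports $(\alpha,\beta)$ separately, while you transport the complex amplitude directly.
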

\begin{proof}
We define
$(\alpha, \beta, \v):=(\re a, \im a, \nabla_x
\Phi)$ and rewrite \eqref{phia} in the following form
\begin{equation}\label{wkb+}
\left \{
\begin{aligned}
& \partial_t \alpha +\v \cdot \nabla_x \alpha +\frac{\alpha}{2} \, \nabla_x \cdot \v -\Omega x^\bot \cdot \nabla_x \alpha=0,\\
& \partial_t \beta +\v \cdot \nabla_x \beta +\frac{\beta}{2} \, \nabla_x \cdot \v -\Omega x^\bot \cdot \nabla_x \beta=0,\\
& \partial_t \v + \nabla_x \left( \frac{|\v|^2}{2}  -\Omega
x^\bot \cdot \v + V + f(\alpha^2+\beta^2)\right)=0.
\end{aligned}
\right.
\end{equation}
Assume that  $(\alpha, \beta, \v)$ is a smooth solution of  the modified WKB
system (\ref{wkb+})  for $t\in [0, T^*)$. Then $\eta=\alpha^2+\beta^2$ and
$\v=\nabla_x \Phi$ are smooth solutions of (\ref{super}). By uniqueness it follows that $ (\eta,
\v)=(\rho, \v)$ for $t \leq T^*$ and hence $T^*\leq T$.

Conversely, assume that $(\rho, \v)$ is the smooth solution of the
hydrodynamic system (\ref{super}) for $t\in[0, T]$, subject to
initial data $(\rho_{\rm in} \geq 0, \v_{\rm in})$ such that $\rho_{\rm in}=\alpha_{\rm in}^2+\beta_{\rm in}^2$.
By assumption the velocity $\v$ is smooth, one thus obtains smooth
$a$ and $b$ from the first two transport equations in (\ref{wkb+}),
subject to initial conditions $(\alpha_{\rm in}, \beta_{\rm in})^\top$. A combination of
the two equations for $\alpha$ and $\beta$ gives
$$
\partial_t (\alpha^2+\beta^2)+\v \cdot \nabla_x (\alpha^2+\beta^2)=-(\alpha^2+\beta^2)\nabla_x \cdot
\v+\Omega x^\bot\cdot \nabla_x (\alpha^2+\beta^2).
$$
Subtracting this from the equation for $\rho$ in (\ref{super}), we
find that $\tilde \rho:=\rho-(\alpha^2+\beta^2)$ solves a transport equation
$$
\partial_t \tilde \rho +\nabla_x\cdot(\v \tilde \rho)=\Omega x^\bot\cdot
\nabla_x \tilde \rho \ ,
$$
with $\tilde \rho \big | _{t=0}=0$, and hence $\tilde
\rho (t,x)\equiv 0$, \ie $\rho=\alpha^2+\beta^2$. This shows that  $(\rho, \v)$ is
also the smooth solution of the hydrodynamic system (\ref{super})
for $t\in[0, T^*]$, hence $T\leq T^*$.

In summary this yields $T = T^*$ for
the solutions of \eqref{phia} and \eqref{wkb+}. To get back to $\Phi$ itself, we
first note that in the equation
$$
\partial_t\Phi +\frac{1}{2}|\nabla_x \Phi|^2 +V(x)-\Omega
x^\bot \cdot \nabla_x \Phi
+f(|a|^2)  = 0
$$
all terms are uniquely determined by \eqref{wkb+} except for $\partial_t \Phi$. Imposing
$\Phi \big | _{t=0} = \Phi_{\rm in}$, such that $\v_{\rm in} = \nabla_x \Phi_{\rm in}$, and setting
$$
\Phi (t,x) = \Phi_{\rm in} (x) + \int_0^t \left(\frac{1}{2} | \v(\tau,x)|^2 + |a(\tau,x)|^2 + V(x)-\Omega
x^\bot \cdot \v(\tau,x)\right) \D \tau ,
$$
we infer that $\partial_t(\nabla_x \Phi - \v) = \nabla_x \partial_t \Phi - \v = 0$, hence $\v = \nabla_x \Phi$.
This then fully determines $\Phi$ on $[0,T]$.
\end{proof}
Having established the equivalence between the limiting systems \eqref{super} and \eqref{phia}, we can now
focus on deriving rigorously \eqref{phia} from \eqref{wkb} (or, equivalently, the NLS \eqref{se}).

\section{The classical limit}

From now on we shall mainly consider the WKB system \eqref{phia} in the form (\ref{wkb+}) which
allows for a treatment in the sense of hyperbolic systems.
In the corresponding analysis, the rotational HJ equation (\ref{HJ}) becomes
important. As a preparatory step we shall therefore study the Cauchy problem
\begin{equation}\label{HJ1}
\left \{
\begin{aligned}
 & \partial_t S + \frac{1}{2}|\nabla_x S|^2 +V(x)-\Omega x^\bot\cdot \nabla_x S=  \ 0,\\
 & S \big|_{t=0} =  \ S_{\rm in}(x),
\end{aligned}
\right.
\end{equation}
The following result will be used throughout this work.´

\begin{Lemma} \label{lem} Let Assumption \ref{ass} hold. If $S_{\rm in}(x)\in C^\infty(\R^d) $ is sub-quadratic, then there
exists a $\tau >0$ 
such that (\ref{HJ}) admits a
unique smooth solution for $t\in[0, \tau)$. Moreover, the
phase $S(t,x)$ remains sub-quadratic in $x$, for all $t\in [0, \tau)$.
\end{Lemma}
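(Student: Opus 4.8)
The plan is to construct the solution of \eqref{HJ1} by the method of characteristics, exploiting the special structure imposed by Assumption \ref{ass}. The Hamiltonian associated with \eqref{HJ1} is
\[
H(x,p) = \tfrac{1}{2}|p|^2 + V(x) - \Omega\, x^\bot\cdot p ,
\]
and since $V(x)=\tfrac12|\omega\cdot x|^2$ is quadratic in $x$ while $x^\bot=Jx$ for a constant antisymmetric matrix $J$, the Hamiltonian $H$ is a \emph{quadratic} form on $\R^d\times\R^d$. Hence Hamilton's equations $\dot X=\partial_p H(X,P)$, $\dot P=-\partial_x H(X,P)$ form a constant-coefficient \emph{linear} system, whose flow is a matrix exponential, defined for all $t\in\R$ and $C^\infty$. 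Imposing $X(0,y)=y$ and $P(0,y)=\nabla_x S_{\rm in}(y)$, one may therefore write
\[
X(t,y) = A(t)\,y + B(t)\,\nabla_x S_{\rm in}(y), \qquad P(t,y) = C(t)\,y + D(t)\,\nabla_x S_{\rm in}(y),
\]
with smooth matrix-valued $A,B,C,D$ satisfying $A(0)=D(0)=\mathrm{Id}$ and $B(0)=C(0)=0$.

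The first step is to show that there is $\tau>0$ such that $y\mapsto X(t,y)$ is a global $C^\infty$-diffeomorphism of $\R^d$ for each $t\in[0,\tau)$. Since $S_{\rm in}$ is sub-quadratic, $M:=\sup_{x}|D^2_x S_{\rm in}(x)|<\infty$, so
\[
D_y X(t,y) = A(t) + B(t)\,D^2_x S_{\rm in}(y)
\]
is, uniformly in $y$, a perturbation of $A(t)$ of size $\le\|B(t)\|\,M$. As $\|A(t)-\mathrm{Id}\|+\|B(t)\|\to0$ for $t\to0^+$, one may choose $\tau=\tau(\Omega,\omega,M)>0$ so that $D_y X(t,y)$ is invertible with $\|(D_yX(t,y))^{-1}\|$ bounded on $[0,\tau)\times\R^d$; writing $X(t,\cdot)=A(t)\big(\mathrm{id}+A(t)^{-1}B(t)\,\nabla_x S_{\rm in}\big)$ and noting that $A(t)^{-1}B(t)\nabla_x S_{\rm in}$ is a contraction on $\R^d$ for $t$ small, a Banach fixed point argument shows $X(t,\cdot)$ is a homeomorphism of $\R^d$, hence (being also a local diffeomorphism) a global $C^\infty$-diffeomorphism. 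Let $Y(t,\cdot)$ denote its inverse.

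Following classical Hamilton--Jacobi theory one would then define
\[
S(t,x) := S_{\rm in}(Y(t,x)) + \int_0^t \big(P\cdot\partial_p H - H\big)\big(X(s,Y(t,x)),\,P(s,Y(t,x))\big)\,\D s ,
\]
i.e.\ the initial value plus the action integrated along the characteristic issuing from $Y(t,x)$, and verify by direct computation that $S$ solves \eqref{HJ1} with $\nabla_x S(t,x)=P(t,Y(t,x))$; smoothness of $S$ is inherited from that of $Y$, $X$, $P$, $S_{\rm in}$, and uniqueness among smooth solutions follows since the gradient of any smooth solution must be transported along the uniquely determined characteristic flow. For the sub-quadratic bound, differentiating $\nabla_x S(t,x)=P(t,Y(t,x))$ gives
\[
D^2_x S(t,x) = D_y P(t,Y(t,x))\,\big(D_y X(t,Y(t,x))\big)^{-1},
\]
and $D_y P(t,y)=C(t)+D(t)D^2_x S_{\rm in}(y)$ is bounded uniformly in $y$ (again by $M<\infty$) while $(D_yX)^{-1}$ is bounded by the previous step, so $D^2_x S(t,\cdot)$ is bounded on $[0,\tau)$. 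For $|\alpha|\ge3$ one argues by induction: since $X(t,\cdot)$ and $P(t,\cdot)$ are affine in $y$ modulo the terms $B(t)\nabla_x S_{\rm in}$ and $D(t)\nabla_x S_{\rm in}$, all their $y$-derivatives of order $\ge2$ involve only $D^k_x S_{\rm in}$ with $k\ge2$ and are thus bounded, and repeated use of the chain rule together with the uniform bound on $(D_yX)^{-1}$ propagates boundedness to every $\partial^\alpha_x S$ with $|\alpha|\ge2$. Hence $S(t,\cdot)$ remains sub-quadratic for $t\in[0,\tau)$.

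The main point to watch is that all statements must be \emph{global} and \emph{uniform} in $x$ — the diffeomorphism property and the derivative bounds — and not merely local; all of these reduce to the single observation that sub-quadratic data have a \emph{bounded} Hessian, which is exactly the object controlling the linearization $D_yX$ of the (linear) characteristic flow, so that $\tau$ may be taken to depend only on $\Omega$, $\omega$ and $M$. Everything else is the classical construction of the action along characteristics. (Alternatively one may differentiate \eqref{HJ1} into a quasilinear symmetric hyperbolic system for $u=\nabla_x S$ and apply local well-posedness in a scale of spaces adapted to sub-quadratic growth; we refer to \cite{Liu:2006} for a closely related treatment.)
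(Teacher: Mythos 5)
Your proof is correct and follows the same overall strategy as the paper --- the method of characteristics for the Hamiltonian $H=\tfrac12|p|^2+V(x)-\Omega x^\bot\cdot p$ --- but the execution of the key sub-quadratic estimate is genuinely different. The paper does not explicitly exploit the linearity of the bicharacteristic flow; instead it derives the matrix Riccati equation $D_t\Sigma+\Sigma^2=-\omega\,{\bf I}_d$ for the Hessian $\Sigma=D_x^2S$ along particle paths and couples it with the Liouville-type identity $\det\Gamma=\exp\left(\int_0^t\tr\Sigma\,\D\tau\right)$ (the rotational contribution dropping out since $\tr J=0$) to control both the Hessian and the Jacobian of the flow map. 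You instead observe that under Assumption \ref{ass} the Hamiltonian is a quadratic form, so the flow is a matrix exponential and $X(t,\cdot)$, $P(t,\cdot)$ are affine in $y$ up to the bounded perturbation $\nabla_x S_{\rm in}$; uniform invertibility of $D_yX$ and the bound $D_x^2S=D_yP\,(D_yX)^{-1}$ then follow from elementary perturbation arguments. Your route is more self-contained and actually fills in points the paper's proof glosses over: the global-in-$x$ invertibility of the flow map (via the Banach fixed point argument), the explicit construction of $S$ from the action integral, and the uniform bounds on derivatives of order $\geq 3$, none of which are spelled out in the paper (whose final step --- inferring uniform boundedness of $\Sigma$ from the $x_0$-independence of $\det\Gamma$ --- is stated rather loosely). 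What the Riccati approach buys in exchange is a route to sharper, potentially global-in-time existence criteria (critical thresholds, cf.\ \cite{Liu:2006}), since breakdown of the smooth solution is governed by an explicit matrix ODE rather than a small-time perturbation argument; for the purely local statement of the lemma, both arguments are adequate.
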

As pointed out in \cite{Car}, the sub-quadratic assumption
for the initial phase is sharp for solving (\ref{HJ}) globally in
space even in the presence of no rotational force. Concerning the
existence of global-in-time smooth solutions we refer to \cite{Liu:2006}.
\begin{proof}
The Hamiltonian corresponding to \eqref{HJ1} is
$$
H=\frac{1}{2}|p|^2 +V(x)-\Omega x^\bot \cdot p.
$$
and thus the corresponding Hamiltonian flow is governed by
\begin{equation*}
\left \{
\begin{aligned}
\dot x& =  p-\Omega x^\bot, \quad x\big | _{t=0}=x_0, \\
 \dot p & = - \omega x-\Omega p^\bot, \quad p\big | _{t=0}=\nabla_x
 S_{\rm in}(x_0).
\end{aligned}
\right.
\end{equation*}
Standard ODE theory implies that there exists a unique solution
for $t\in [0, \tau]$, in which the map $x=x(t, x_0)$ is well defined, satisfying
$$
\det (\Gamma)>0, \quad \Gamma:=\frac{\partial }{\partial x_0} \, x(t,
x_0).
$$
Existence of smooth solutions is therefore guaranteed and we now
have to prove that $S$ remains sub-quadratic for $t \in [0, \tau]$. From \cite{Liu:2006} we infer that
the phase gradient $u=\nabla_x S$
satisfies
$$
\partial_t u +(u-\Omega x^\bot)\cdot \nabla_x u= - \omega  x -\Omega u^\bot.
$$
Further, along the particle path induced via $\dot x=u-\Omega x^\bot$ the Hessian of the phase function
$\Sigma:=D^2_x S(t,x)$ solves a matrix ODE
\begin{equation}\label{mk}
D_t \Sigma + \Sigma^2= - \omega {\bf I}{_d},
\end{equation}
where we shortly denote
$$
D_t:=\partial_t +(U-\Omega x^\perp)\cdot \nabla_x.
$$
This shows that $\Sigma$ remains uniformly bounded in terms of $x _0\in \R^d$.
Finally we show that the existence time $\tau$ does not shrink as
$x$ varies over $\R^d$.  Differentiation of $\dot x=u-\Omega
x^\bot$ in terms of $x_0$ yields
$$
D_t \Gamma=(\Sigma -\Omega J)\Gamma, \quad J=\left(%
\begin{array}{ccc}
  0 & 1 & 0 \\
  -1 & 0 & 0 \\
  0 & 0 & 0 \\
\end{array}%
\right),
$$
for $d=3$. (The expression for $J$ in the case $d=2$ is obvious.)
This gives
$$
\det(\Gamma)=\exp \left( \int_0^t \tr (\Sigma -\Omega J)\, \D \tau \right)=\exp
\left( \int_0^t \tr \Sigma \, \D \tau \right),
$$
where $\tr$ denotes the standard trace map. The fact that
$\det(\Gamma)$ does not depend on $x_0$ explicitly implies that $\Sigma$ is
uniformly bounded in $x_0 \in \R^d$.
\end{proof}

In order to prove the local-in-time existence for the system (\ref{wkb}),
we follow the strategy in \cite{Car} and decompose the phase $\Phi^\e$ into
\begin{equation}\label{decom}
\Phi^\e=\varphi^\e +S,
\end{equation}
where $S$ is the smooth, sub-quadratic phase function guaranteed by Lemma \ref{lem}.
In terms of $\varphi^\e$ and $ a^\e$ the system (\ref{wkb}) becomes
\begin{equation}\label{wkbneu}
\left \{
\begin{aligned}
& \partial_t\varphi^\e +\nabla_x S\cdot
\nabla_x \varphi^\e+\frac{1}{2}|\nabla_x \varphi^\e|^2  -\Omega x^\bot \cdot \nabla_x \varphi^\e
+f(|a^\e|^2)=0, \\
&\partial_t a^\e +\nabla_x(S+\varphi^\e)\cdot \nabla_x a^\e
+\frac{a^\e}{2} \, \Delta (S+\varphi^\e) -\Omega x^\bot \cdot \nabla_x
a^\e =\frac{\I \e}{2} \, \Delta_x a^\e.
\end{aligned}
\right.
\end{equation}
Note that this set of equations is still \emph{equivalent}
to the nonlinear Schr\"odinger equation \eqref{se}. The reason for
decomposing $\Phi^\e$ via \eqref{decom} is rather technical and due to the inclusion of the potential $V$ and
the rotational term.

For notational convenience we further introduce
$$
U^\e:=(\re a^\e,\,  \im a^\e, \, \partial_{x_1} \varphi^\e, \dots , \partial_{x_d} \varphi^\e)^\top,
$$
where $a^\e$ and $\varphi^\e$ satisfy \eqref{wkbneu} (an analogous notation is used for the corresponding initial data).
Moreover, we shall frequently use the notation
\begin{equation}\label{N}
\mathcal N[U^\e(t]):={\|\, U^\e(t) \, \|}_s + {\| \, |x|U^\e(t) \, \|}_{s-1},
\end{equation}
where ${\|\cdot\|}_s$ is the usual $H^s(\R^d)$-norm.

\begin{Proposition}[Local existence] \label{pro2.1} Denote by $\tau >0$ the existence time of smooth solution $S(t,x)$ to \eqref{HJ1}, and let the Assumption \ref{ass} hold. Consider
the Schr\"{o}dinger equation \eqref{se} subject to
initial data, which satisfy Assumption \ref{ass1} such that
$U^\e_{\rm in} \in H^s(\R^d)$ and $|x|U^\e_{\rm in} \in H^{s-1}(\R^d)$, for
$s>2+d/2$.
Then there exists a time $T_{\e}\in (0, \tau)$,
and a unique solution to \eqref{se}
of the following form
$$
\psi^\e(t,x)=a^\e(t, x)\E^{\I \Phi^\e(t,x)/\e}, \quad \text{for $0\leq t\leq T_{\e}$}.
$$
Moreover, it holds
$$ U^\e \in L^\infty((0, T_{\e}], H^s(\R^d)), \quad |x|U^\e \in
L^\infty((0, T_{\e}], H^{s-1}(\R^d)).$$
\end{Proposition}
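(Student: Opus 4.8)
The plan is to symmetrize the system \eqref{wkbneu} and apply the standard theory for quasilinear symmetric hyperbolic systems with a semilinear $\mathcal O(\e)$ perturbation, in the spirit of Grenier \cite{Gre} and Carles \cite{Car}, while carrying along the weighted estimate that controls $\|\,|x|U^\e\|_{s-1}$. First I would rewrite \eqref{wkbneu} for the unknown $U^\e=(\re a^\e,\im a^\e,\nabla_x\varphi^\e)^\top$ in the form
\begin{equation*}
\partial_t U^\e + \sum_{j=1}^d A_j(U^\e)\,\partial_{x_j}U^\e + \big(\nabla_x S\cdot\nabla_x - \Omega\, x^\bot\cdot\nabla_x\big) U^\e = \frac{\e}{2}\,L U^\e + B(x)\,U^\e,
\end{equation*}
where the $A_j(U^\e)$ are symmetric matrices depending smoothly on $U^\e$ (the coupling between the amplitude block and the $\nabla_x\varphi^\e$ block is skew in the obvious way and becomes symmetric after multiplying the velocity block by $2f'(|a^\e|^2)$, exactly as in \cite{Gre}; here one uses $f'>0$ from Assumption \ref{ass}), $L$ is the skew-symmetric operator $U\mapsto (\im\Delta a^\e,-\re\Delta a^\e,0)^\top$ so that the $\O(\e)$ term contributes nothing to the $L^2$ energy, the term $B(x)U^\e$ collects the contributions of $V=\tfrac12|\omega\cdot x|^2$ and of the lower-order rotational piece, and the transport operator $\nabla_x S\cdot\nabla_x-\Omega x^\bot\cdot\nabla_x$ has smooth, \emph{sub-quadratic} coefficients on $[0,\tau)$ by Lemma \ref{lem}.

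Next I would set up the iteration scheme: define $U^\e_{k+1}$ by solving the linear system obtained from the above by freezing the nonlinearity at $U^\e_k$ in the coefficients $A_j$ and in $B$, with initial data $U^\e_{\rm in}$. Each linear problem is solvable in $C([0,\tau);H^s)$ by standard linear hyperbolic theory. The core of the argument is the a priori estimate: applying $\partial^\alpha_x$ for $|\alpha|\le s$, pairing with $\partial^\alpha_x U^\e$ in $L^2$, and using symmetry of the $A_j$, the skew-symmetry of $L$ (so the $\e$-term drops), Moser-type commutator and product estimates in $H^s$ for the quasilinear terms, and — crucially — the sub-quadratic growth of $\nabla_x S$ and the linear growth of $x^\bot$ together with the weighted norm, one obtains a differential inequality
\begin{equation*}
\frac{d}{dt}\,\mathcal N[U^\e(t)]^2 \le C\big(1+\mathcal N[U^\e(t)]\big)^{s}\,\mathcal N[U^\e(t)]^2,
\end{equation*}
uniformly in $\e\in(0,1]$. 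The weighted part $\||x|U^\e\|_{s-1}$ is estimated in parallel: commuting $|x|$ (or rather $\langle x\rangle$) through the equation produces the operator $[\,|x|,\nabla_x S\cdot\nabla_x-\Omega x^\bot\cdot\nabla_x\,]$, whose symbol is controlled by $\nabla^2 S$ (bounded by Lemma \ref{lem}) and by $1$, hence closes on the combined quantity $\mathcal N[U^\e]$. This Gronwall argument yields a uniform existence time $T_\e\equiv T\in(0,\tau)$ and a uniform bound $\sup_{[0,T]}\mathcal N[U^\e(t)]\le M$; the usual contraction/compactness argument in the lower norm $L^2$ (or $H^{s-1}$) then gives convergence of the iterates to a solution, and uniqueness follows from the same $L^2$ stability estimate. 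Reconstructing $\Phi^\e=\varphi^\e+S$ and $\psi^\e=a^\e e^{\I\Phi^\e/\e}$, and checking that this indeed solves \eqref{se} (which is guaranteed since \eqref{wkbneu} is equivalent to \eqref{se}), completes the proof.

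The main obstacle is the unbounded-coefficient structure coming simultaneously from the quadratic potential $V$, from the linearly growing rotational field $x^\bot$, and from the merely sub-quadratic phase $S$: none of these define bounded operators on $H^s$, so the estimates do not close on $\|U^\e\|_s$ alone. The device that saves the day is precisely the weighted norm $\mathcal N[U^\e]$ in \eqref{N}, for which the growth of $V$ and $x^\bot$ is exactly compensated: $\|\,x\,U^\e\|_{s-1}$ absorbs one power of $x$, and the second derivatives $D^2_x S$ — bounded on $[0,\tau)$ by Lemma \ref{lem} — control the commutators. I expect the bookkeeping of these commutators (between $\langle x\rangle$, $\partial^\alpha_x$, and the transport operator with sub-quadratic coefficients) to be the only genuinely delicate point; everything else is the by-now standard Grenier–Carles machinery, with $f'>0$ ensuring the hyperbolic symmetrizability and the $\O(\e)$ Laplacian term being harmless because it enters skew-symmetrically.
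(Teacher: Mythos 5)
Your proposal is correct and follows essentially the same route as the paper: decompose $\Phi^\e=\varphi^\e+S$ with $S$ the sub-quadratic solution of the rotational Hamilton--Jacobi equation, recast \eqref{wkbneu} as a quasilinear hyperbolic system for $U^\e$ that is symmetrizable thanks to $f'>0$, and close $H^s$ plus weighted $H^{s-1}$ energy estimates via a Gronwall argument, the $\O(\e)$ term being skew-symmetric. One small correction: after the decomposition the potential $V$ does \emph{not} reappear as a zeroth-order term $B(x)U^\e$ --- it is entirely absorbed into $S$, and the surviving zeroth-order coefficient $\nabla_x w$ with $w=\nabla_x S-\Omega x^\bot$ is bounded; the weighted norm is needed rather because the symmetrizer $Q$ depends on $f'(|a^\e|^2)$, so differentiating the energy in time brings in the sub-linear transport field $w$ acting on $a^\e$, which is exactly what ${\|\,|x|U^\e\,\|}_{s-1}$ controls.
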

We thus know that locally-in-time the oscillatory structure of the modified WKB representation for solutions to \eqref{se}
persists, as long as the classical rotational HJ equation has smooth solutions.
\begin{proof}
Introducing the velocities $v^\e:=\nabla_x \varphi^\e$ and $w:=\nabla_x S-\Omega x^\bot$ in (\ref{wkbneu}),
we have
\begin{equation}\label{3.7}
\left \{
\begin{aligned}
& \partial_t v^\e  +(v^\e+w) \cdot \nabla_x v^\e +(\nabla_x
w)v^\e + \nabla_x f(|a^\e|^2)=0, \\
&\partial_t a^\e +(v^\e +w)\cdot \nabla_x a^\e +\frac{a^\e}{2}
\nabla_x\cdot (w + v^\e) =\frac{\I \e}{2}\Delta_x a^\e.
\end{aligned}
\right.
\end{equation}
From Lemma \ref{lem} we know that $w=\nabla_x S-\Omega x^\perp$ is indeed
sub-linear. We further separate $a^\e$ into its real and imaginary part, \ie $a^\e
=\alpha^\e+\I \beta^\e$, to obtain the following hyperbolic system
\begin{equation}\label{hb}
    \partial_t U^\e +\sum_{j=1}^d
    \left(A_j(U^\e)+B_j(w)\right)\partial_{x_j}U^\e + {M}(\nabla_x
    w)U^\e=\frac{\epsilon}{2} \, LU^\e,
\end{equation}
where $U^\e=(\alpha^\e, \beta^\e, v_1^\e, \cdots, v_d^\e)^\top $. The coefficients matrices are
$$
\sum_{j=1}^d A_j\xi_j=\left(%
\begin{array}{ccc}
  v^\e\cdot \xi & 0 & \frac{\alpha^\e}{2}\xi^\top \\
  0 &   v^\e\cdot \xi & \frac{\beta^\e}{2}\xi^\top\\
  2f'\alpha^\e\xi &  2f'\beta^\e\xi & v^\e\cdot \xi \, {\bf I}_{d} \\
\end{array}%
\right), \quad \sum_{j=1}^d B_j\xi_j= w\cdot \xi \, {\bf I}_{d+2}
$$
with $f'=f'(|a^\e|^2+ |b^\e|^2)$ and
$$
{M}(\nabla_x w)=\left(%
\begin{array}{ccc}
  \frac{1}{2} \nabla_x \cdot w & 0  & 0 \\
   0 &  \frac{1}{2} \nabla_x \cdot w  & 0 \\
   0 & 0 & \nabla_x  w \\
\end{array}%
\right), \quad L=\left(%
\begin{array}{ccc}
  0 & -\Delta & 0  \\
  \Delta & 0 & 0  \\
  0 & 0 & {\bf 0}_{d} \\
\end{array}%
\right).
$$
Observe that \eqref{hb} can be symmetrized by
$$
Q=\left(%
\begin{array}{cc}
  {\bf I}_{2} & 0 \\
  0 & \frac{1}{4f'}{\bf I}_{d} \\
\end{array}%
\right),
$$
which explains the necessity for our assumption $f' >0$. (In \cite{Car1} it is shown how to
overcome this difficulty in case of higher order nonlinearities.)
We are thus able to proceed with energy estimates in the Sobolev
space $H^s(\R^d)$, which follow from the classical theory for hyperbolic systems:
\begin{Lemma} \label{lem1}
Assume that $w$ is sub-linear. For $s>2+d/2$, $|\alpha|\leq s$ and $|\beta|\leq s-1$,
there exists a locally bounded map $\mathcal{C}(\cdot)$,
satisfying $\mathcal{C}'>0$ and $\mathcal{C}(0)\geq 1$, such that
\begin{align}
\frac{\D}{\D t} \, \langle Q \, \partial_x^\alpha U^\e, \partial_x^\alpha U^\e \rangle & \leq
\mathcal{C}(N[U^\e]) \, {\|U^\e\|}_s^2 \, ,\\
\frac{\D}{\D t} \, \langle Q \, \partial_x^\beta (x_jU^\e), \partial_x^\beta (x_jU^\e)\rangle & \leq
\mathcal{C}(N[U^\e]) \,\left({\|U^\e\|}_s^2+{\|\, |x| U^\e\|}^2_{s-1}\right) \, ,
\end{align}
where $\langle \cdot, \cdot \rangle$ denotes the usual scalar product on $L^2(\R^d)$.
\end{Lemma}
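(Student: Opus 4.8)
Here is how I would prove Lemma~\ref{lem1}.

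\textbf{Overall strategy.} The plan is to run the classical $H^s$ energy method for symmetric hyperbolic systems, in the form used by Grenier~\cite{Gre} and Carles~\cite{Car}, adapted to the two features special to \eqref{hb}: the transport coefficient $w=\nabla_x S-\Omega x^\bot$, which is only \emph{sub-linear} rather than bounded, and the skew-symmetric dispersive perturbation $\tfrac{\e}{2}LU^\e$. The matrix $\sum_j A_j\xi_j$ is symmetrized by $Q$ (one checks that $Q\sum_jA_j\xi_j$ is symmetric for every $\xi$, and $QB_j=w_jQ$ is symmetric as well since $B_j=w_jI_{d+2}$), and $Q$ is symmetric positive definite because $f'>0$; the quantity $\langle Q\,\partial_x^\alpha U^\e,\partial_x^\alpha U^\e\rangle$ is therefore equivalent to $\|\partial_x^\alpha U^\e\|_{L^2}^2$, with equivalence constants that are locally bounded functions of $\|U^\e\|_{L^\infty}$, hence of $\mathcal N[U^\e]$.

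\textbf{The unweighted estimate.} Apply $\partial_x^\alpha$, $|\alpha|\le s$, to \eqref{hb}. Since $L$ has constant coefficients it commutes with $\partial_x^\alpha$, so one gets
\[
\partial_t\partial_x^\alpha U^\e+\sum_{j=1}^d\big(A_j+B_j\big)\partial_{x_j}\partial_x^\alpha U^\e+M\,\partial_x^\alpha U^\e=\frac{\e}{2}L\,\partial_x^\alpha U^\e+R_\alpha,
\]
where $R_\alpha$ gathers the commutators $[\partial_x^\alpha,A_j]\partial_{x_j}U^\e$, $[\partial_x^\alpha,B_j]\partial_{x_j}U^\e$ and $[\partial_x^\alpha,M]U^\e$. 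By the Moser-type tame product and commutator estimates in $H^s$, and using that $s>2+d/2$ gives $H^{s-1}\hookrightarrow W^{1,\infty}$, one bounds $\|R_\alpha\|_{L^2}\le\mathcal C(\mathcal N[U^\e])\,\|U^\e\|_s$; here we use that $A_j$ depends smoothly on $U^\e$, that $f'>0$ makes $1/f'$, $f''/f'^2$, etc.\ locally bounded functions of $\|U^\e\|_{L^\infty}$, and that $\nabla_x w$ and all higher derivatives of $w$ are bounded by Lemma~\ref{lem}. Pairing with $2Q\,\partial_x^\alpha U^\e$ in $L^2$ gives
\[
\frac{\D}{\D t}\langle Q\,\partial_x^\alpha U^\e,\partial_x^\alpha U^\e\rangle=\langle(\partial_tQ)\,\partial_x^\alpha U^\e,\partial_x^\alpha U^\e\rangle+2\langle Q\,\partial_t\partial_x^\alpha U^\e,\partial_x^\alpha U^\e\rangle.
\]
Because $Q(A_j+B_j)$ is symmetric, an integration by parts turns the principal part into the zeroth-order term $-\langle\partial_{x_j}(Q(A_j+B_j))\,\partial_x^\alpha U^\e,\partial_x^\alpha U^\e\rangle$; note $\partial_{x_j}(w_jQ)$ involves only $\nabla_x w$ (bounded) and $\partial_{x_j}Q$, not $w$ itself, so this is $\le\mathcal C(\mathcal N[U^\e])\|U^\e\|_s^2$. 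The contributions of $M$, of $R_\alpha$, and of $\partial_tQ$ are estimated directly by $\mathcal C(\mathcal N[U^\e])\|U^\e\|_s^2$; for $\partial_tQ$ one substitutes $\partial_ta^\e$ from \eqref{3.7}, whose term $\tfrac{\I\e}{2}\Delta a^\e$ forces $s>2+d/2$ so that $\e\|\Delta a^\e\|_{L^\infty}\le\|U^\e\|_s$ (and $\e\le1$). Finally, writing $\partial_x^\alpha U^\e=(g_1,g_2,\vec g_3)$ with $g_1,g_2$ real-valued, the block structure of $L$ and the fact that the first block of $Q$ is $\mathbf I_2$ give $\langle QL\,\partial_x^\alpha U^\e,\partial_x^\alpha U^\e\rangle=\int_{\R^d}(-\Delta g_2\,g_1+\Delta g_1\,g_2)\,\D x=0$ after integration by parts: the $O(\e)$ perturbation contributes \emph{nothing} to the energy balance. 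Taking $\mathcal C$ to be the largest of the constants met (hence increasing and $\ge1$ at $0$) yields the first inequality.

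\textbf{The weighted estimate.} Multiplying \eqref{hb} by $x_j$ and commuting, one has $[x_j,\sum_k(A_k+B_k)\partial_{x_k}]U^\e=-(A_j+B_j)U^\e$, while $[x_j,L]$ produces an $O(1)$ operator acting by single derivatives on the $(\alpha^\e,\beta^\e)$-components. Thus $x_jU^\e$ solves a system of the same structure as \eqref{hb} with source terms $(A_j+B_j)U^\e$ (plus the $O(\e)$ commutator from $L$); among these only $B_jU^\e=w_jU^\e$ grows linearly in $x$, and $\|w_jU^\e\|_{s-1}\le\mathcal C(\mathcal N[U^\e])\big(\|U^\e\|_{s-1}+\|\,|x|U^\e\|_{s-1}\big)$ because $w$ is sub-linear with bounded derivatives of all orders, while $\|(A_j+B_j)U^\e\|_{s-1}$ minus that piece is quadratic in $U^\e$ and bounded by $\mathcal C(\mathcal N[U^\e])\|U^\e\|_s$. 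Applying $\partial_x^\beta$, $|\beta|\le s-1$, and pairing with $2Q\,\partial_x^\beta(x_jU^\e)$, the same argument applies: the symmetric principal part integrates by parts, $\langle QL\,\partial_x^\beta(x_jU^\e),\partial_x^\beta(x_jU^\e)\rangle=0$ exactly, and the extra $O(\e)$ term from $[x_j,L]$ is bounded by $\e\|U^\e\|_s\|\,|x|U^\e\|_{s-1}\le\mathcal C(\mathcal N[U^\e])(\|U^\e\|_s^2+\|\,|x|U^\e\|_{s-1}^2)$. Collecting everything gives the second inequality.

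\textbf{Main obstacle.} The delicate point is not the dispersive term — which is in fact harmless, the structure of $L$ and $Q$ making its energy contribution vanish identically — but the bookkeeping around the linearly-growing coefficient $w$ in the weighted estimate: one must check that whenever a weight $x_j$ meets $w$, or $w$ appears carrying fewer than the maximal number of derivatives, the resulting contribution is absorbed into $\|U^\e\|_s$ or $\|\,|x|U^\e\|_{s-1}$. This is exactly what Lemma~\ref{lem} provides, since $\nabla_x w$ and all higher $x$-derivatives of $w$ are uniformly bounded.
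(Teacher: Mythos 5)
The paper offers no proof of Lemma~\ref{lem1} at all --- it simply asserts that the estimates ``follow from the classical theory for hyperbolic systems'' --- so your sketch is being compared against the standard argument the authors have in mind. Your overall strategy is exactly that argument: symmetrize with $Q$, run the $H^s$ energy method with Moser/commutator estimates, observe that the skew structure of $L$ against $Q$ makes the $\O(\e)$ dispersive term drop out of the energy balance identically, and track the weight $x_j$ through the commutators $[x_j,(A_k+B_k)\partial_{x_k}]$ and $[x_j,L]$. Most of this is sound.

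There is, however, one concrete false step, and it sits at the genuinely delicate point of the lemma. You assert that after integrating the principal part by parts, ``$\partial_{x_j}(w_jQ)$ involves only $\nabla_x w$ (bounded) and $\partial_{x_j}Q$, not $w$ itself.'' The product rule gives $\partial_{x_j}(w_jQ)=(\partial_{x_j}w_j)Q+w_j\,\partial_{x_j}Q$, and the second term \emph{does} carry the linearly growing factor $w_j$, multiplied by $\partial_{x_j}Q$, whose nonzero block is $-\tfrac{f''}{4(f')^2}\,\partial_{x_j}(|a^\e|^2)\,{\bf I}_d$ and has no decay for free. The same unboundedness infects your treatment of $\partial_tQ$: substituting $\partial_ta^\e$ from \eqref{3.7} produces not only the term $\tfrac{\I\e}{2}\Delta a^\e$ you single out, but also $(v^\e+w)\cdot\nabla_xa^\e$, again with the unbounded $w$. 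Neither contribution is $\le\mathcal C(\mathcal N[U^\e])\,{\|U^\e\|}_s^2$ by the reasoning you give. The inequality is nevertheless true, and there are two standard ways to close it. Either combine $\partial_tQ$ with the two convection contributions into the material derivative $\bigl(\partial_t+(v^\e+w)\cdot\nabla_x\bigr)Q$, which is bounded because $|a^\e|^2$ satisfies the transport equation
\begin{equation*}
\bigl(\partial_t+(v^\e+w)\cdot\nabla_x\bigr)|a^\e|^2=-|a^\e|^2\,\nabla_x\cdot(v^\e+w)-\e\im\bigl(\overline{a^\e}\,\Delta a^\e\bigr)
\end{equation*}
along exactly this field, with right-hand side in $L^\infty$; or note that the weighted norm already built into $\mathcal N[U^\e]$ controls ${\|\,|x|\nabla_xa^\e\|}_{L^\infty}\lesssim{\|\,|x|U^\e\|}_{s-1}+{\|U^\e\|}_s$ (since $s-1>1+d/2$), so that $w_j\,\partial_{x_j}Q$ and $w\cdot\nabla_x|a^\e|^2$ are in $L^\infty$ with norm $\le\mathcal C(\mathcal N[U^\e])$. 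Either fix should be stated explicitly: this interaction between the linearly growing transport field and the solution-dependent symmetrizer is the one place where ``classical theory'' needs an actual argument, and it is also why the \emph{unweighted} estimate already requires the constant to depend on the full $\mathcal N[U^\e]$ rather than on ${\|U^\e\|}_s$ alone --- a point your sketch, which locates all the difficulty in the weighted estimate, misses.
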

Equipped with these estimates, we are able to conclude the local-in-time existence result. Set
$$
\mathcal{E}[U^\e]:=\sum_{|\alpha|\leq s }\langle Q\partial_x^\alpha U^\e,
\partial_x^\alpha U^\e\rangle +\sum_{j=1}^d\sum_{|\beta|\leq s-1 }\langle Q \, \partial_x^\beta
(x_jU^\e), \partial_x^\beta(x_j U^\e) \rangle,
$$
then the estimates presented above yield
$$
\mathcal{E}[U^\e(t)]\leq \mathcal{E}[U^\e(0)] + C \int_0^t
\mathcal{C}(\mathcal N[U^\e(\tau)]) \, \mathcal{E}[U^\e(\tau)]\, \D \tau .
$$
Invoking a Gronwall-type inequality we consequently arrive at
$$
\mathcal{E}[U^\e](t) \leq \mathcal{E}[U^\e(0)] \, \exp \left(C\int_0^t
\mathcal{C}(\mathcal N[U^\e](\tau)) \, \D \tau \right),
$$
which we expect to have a relaxed bound $2\mathcal{E}(0)$ for a
finite time where $\mathcal N[U^\e] \leq C_1$ for $C_1 > C_0 = \mathcal N[U^\e_{\rm
in}]$. Thus an existence time exists  and satisfies
$$
t \leq T_\e= \frac{\ln 2}{C \mathcal{C}(C_1)}.
$$
It is obvious that the smaller the initial data (measured by $C_1$),
the larger the time-interval of existence. A local-in-time existence is thus
established.
\end{proof}
\begin{Remark}The local-in-time existence for solutions of the limiting  system \eqref{wkb+} can be proved analogously.
\end{Remark}

For $\e \in (0, 1]$, assume that the initial data satisfies $\mathcal N[U_{\rm in}^\e]
\leq C_0<C$. Thus, for $\e \in (0, 1]$ fixed, the local existence shows that for any
number $C_1 \in (C_0, C)$, there exists a $T_\e>0$ so that
(\ref{phia}) has a unique classical solution satisfying $\mathcal N[U^\e]
\leq C_1$ for $ t\in [0, T_\e]$. Define
$$
T^\e:=\sup \{ 0<T_{\e} \leq \tau: \  \mathcal N[U^\e](t)\leq C_1, \
\forall \, t\in [0, T_\e]\}.
$$
Namely, $[0, T^\e)$ is the maximal time-interval of existence
and depends on $C_1$. It will be necessary to show that $\lim_{\e \to 0} T^\e>0$,
which we shall prove in Corollary \ref{th2.2}.

\begin{Theorem}[Convergence rates] \label{pro2.2}
Under the same assumptions as in Proposition \ref{pro2.1},
suppose that there exist $a_{\rm in}, \Phi_{\rm in} \in H^s(\R^d)$, for
$s>2+d/2$, such that
$$
{\|a_{\rm in}^\e-a_{\rm in}
\|}_s=\O(\epsilon) .
$$
Let $U:=(\re a,\,  \im a, \, \partial_{x_1} \varphi, \dots , \partial_{x_d} \varphi)^\top$
be the smooth solution to \eqref{phia} corresponding to
the initial
data $(\Phi_{\rm in}, a_{\rm in})$.  If
$$
U \in L^\infty((0, T^*], H^s(\R^d)), \quad |x|U \in L^\infty((0, T^*],
H^{s-1}(\R^d))
$$
with $T^*>0$ finite, then there exists $\e_0$ and $ C_* >0$, such
that for $\e \leq \e_0$
$$
{\| \, a^\e(t) -a(t) \, \|}_s \leq  C_* \e, \quad {\| \, \Phi^\e(t) -\Phi(t) \, \|}_s\leq C_* \e t,
$$
for all $t\in [0, \min\{T^*, T^\e \})$.
\end{Theorem}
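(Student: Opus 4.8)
The plan is to adapt the weighted energy method of Proposition \ref{pro2.1} to the difference $W^\e := U^\e - U$ between the exact WKB unknown and its formal limit, both written in the $\varphi$\/-decomposed form $U^\e=(\re a^\e,\im a^\e,\nabla_x\varphi^\e)$, $U=(\re a,\im a,\nabla_x\varphi)$ using the \emph{same} sub-quadratic phase $S$ from Lemma \ref{lem} in $\Phi^\e=\varphi^\e+S$, $\Phi=\varphi+S$, with $S\big|_{t=0}=\Phi_{\rm in}$. Then $\varphi^\e_{\rm in}=\varphi_{\rm in}=0$, the velocity components of $W^\e$ vanish at $t=0$, and together with ${\|a^\e_{\rm in}-a_{\rm in}\|}_s=\mathcal O(\e)$ (and its weighted counterpart for the data) one has $\mathcal N[U^\e_{\rm in}-U_{\rm in}]=\mathcal O(\e)$. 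Subtracting the hyperbolic system \eqref{hb} for $U^\e$ from the corresponding $\e=0$ system satisfied by $U$ gives
\[
\partial_t W^\e + \sum_{j=1}^d\big(A_j(U^\e)+B_j(w)\big)\partial_{x_j}W^\e + M(\nabla_x w)W^\e = \frac{\e}{2}\,LW^\e + \frac{\e}{2}\,LU + R^\e,
\]
where $R^\e:=-\sum_j\big(A_j(U^\e)-A_j(U)\big)\partial_{x_j}U$ is \emph{quasilinear} in $W^\e$ (the coefficients $B_j(w),M(\nabla_x w)$ are common to both systems and produce no source) and, by the Moser-type estimates already used for Lemma \ref{lem1}, contributes to the energy estimate a term $\lesssim \mathcal C(\mathcal N[U^\e],\mathcal N[U])\,\mathcal E[W^\e]$, with $\mathcal E$ the energy functional from the proof of Proposition \ref{pro2.1}.

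Next I would run exactly the estimates of Lemma \ref{lem1} on this equation, with the symmetrizer $Q=Q(U^\e)$, which is uniformly positive and bounded on $[0,\min\{T^*,T^\e\})$ since $f'>0$ is continuous and $|a^\e|$ is bounded there. The commutator terms and the $\partial_t Q$ term are controlled, just as in Proposition \ref{pro2.1}, by $\mathcal C(\mathcal N[U^\e],\mathcal N[U])\,\mathcal E[W^\e]\le C_2\,\mathcal E[W^\e]$, using $\mathcal N[U^\e]\le C_1$ and $\mathcal N[U]<\infty$. The crucial structural point concerns the $\mathcal O(\e)$ terms: since $L$ has constant coefficients and its lower $d\times d$ block vanishes we have $Q(U^\e)L=L$, and $L$ is skew-adjoint on the amplitude block, so $\e\langle L\partial_x^\alpha W^\e,\partial_x^\alpha W^\e\rangle\equiv0$, and likewise the weighted version vanishes after commuting $x_j$ through $L$ at the cost of the harmless zero-order commutator $[L,x_j]=\pm2\partial_{x_j}$ (contributing $\lesssim\e\,{\|U^\e\|}_s\,\mathcal E[W^\e]^{1/2}$). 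What survives is only $\e\langle L\partial_x^\alpha U,\partial_x^\alpha W^\e\rangle$ and its weighted analogue, bounded by $\e\big({\|U\|}_{s+2}+{\|\,|x|U\|}_{s+1}\big)\,\mathcal E[W^\e]^{1/2}$. Altogether $\frac{\D}{\D t}\mathcal E[W^\e]^{1/2}\le C_2'\,\mathcal E[W^\e]^{1/2}+C_3'\e$, and a Gronwall argument with $\mathcal E[W^\e](0)=\mathcal O(\e^2)$ gives $\mathcal E[W^\e](t)^{1/2}\le C_*\e$ on $[0,\min\{T^*,T^\e\})$; unwinding the definitions this is ${\|a^\e(t)-a(t)\|}_s+{\|\,|x|(a^\e-a)(t)\|}_{s-1}+{\|\nabla_x(\Phi^\e-\Phi)(t)\|}_s\le C_*\e$, which contains the first claimed bound.

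The main obstacle is exactly the residual term $\frac\e2 LU$: the skew-adjointness of $L$ removes the derivative loss that would otherwise come from $U^\e$ (known only in $H^s$, not $H^{s+2}$), but it leaves two derivatives on the \emph{limit} $U$. This is harmless provided $U\in L^\infty_t H^{s+2}$ and $|x|U\in L^\infty_t H^{s+1}$; since the data are smooth and the limiting system \eqref{phia} propagates higher Sobolev regularity on the whole interval $[0,T^*]$ on which its $H^s$-solution exists (the local existence time in Proposition \ref{pro2.1} depending only on low-order norms, one continues the smoother solution up to the same $T^*$), we may assume this without loss of generality. Finally, for the phase I would integrate the first equation of \eqref{wkbneu} in time: since $\Phi^\e-\Phi=\varphi^\e-\varphi$ and
\[
\partial_t(\Phi^\e-\Phi) = -\nabla_x S\cdot\nabla_x(\varphi^\e-\varphi) - \frac12\big(|\nabla_x\varphi^\e|^2-|\nabla_x\varphi|^2\big) + \Omega x^\bot\cdot\nabla_x(\varphi^\e-\varphi) - \big(f(|a^\e|^2)-f(|a|^2)\big),
\]
whose right-hand side is $\mathcal O(\e)$ in $H^s$ by the bounds just obtained on $\nabla_x(\Phi^\e-\Phi)=v^\e-v$ and on $a^\e-a$ (the linearly growing factors $\nabla_x S$ and $x^\bot$ being absorbed by the weighted part of $\mathcal N$), one gets ${\|\Phi^\e(t)-\Phi(t)\|}_s\le{\|\Phi^\e_{\rm in}-\Phi_{\rm in}\|}_s+\int_0^t\mathcal O(\e)\,\D\tau=C_*\e t$, since the initial phases coincide.
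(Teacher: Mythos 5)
Your proposal follows essentially the same route as the paper: form $W^\e=U^\e-U$ with the same decomposition $\Phi^\e=\varphi^\e+S$, derive the perturbed symmetrizable hyperbolic system for $W^\e$ with source $R^\e$ containing $\sum_j(A_j(U)-A_j(U^\e))\partial_{x_j}U$ and $\tfrac{\e}{2}LU$, run the weighted energy estimates of Lemma \ref{lem1} with the symmetrizer $Q$, close via Gronwall from $\mathcal E[W^\e(0)]=\O(\e^2)$, and recover the phase bound by time-integration of the $\varphi$-equation. Your explicit treatment of the residual $\tfrac{\e}{2}LU$ (requiring and justifying $U\in L^\infty_tH^{s+2}$ by propagation of higher regularity) is a point the paper glosses over, but it is a refinement of the same argument rather than a different one.
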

As a consequence, we infer that the pair $(\rho, \v) = (|a|^2, \nabla_x \Phi)$, solves the
hydrodynamical system \eqref{super}. Note that $\Phi = \varphi + S$, where $S$ is determined by the
HJ equation \eqref{HJ}.
\begin{proof}
As $U^\e$ solves the forced hyperbolic system (\ref{hb}), then the
corresponding limiting function $U$ is governed by
\begin{equation}\label{hb+}
    \partial_t U +\sum_{j=1}^d
    (A_j(U)+B_j(w))\, \partial_{x_j}U +M(\nabla_x
    w)U =0.
\end{equation}
Denote $W^\e:= U^\e -U$, we thus have $LU^\e=LW^\e +LU$ and $W^\e$ solves
\begin{equation}\label{hbz}
    \partial_t W^\e +\sum_{j=1}^d
    (A_j(U^\e)+B_j(w))\partial_{x_j}W^\e +M(\nabla_x
    w)W^\e =\frac{\epsilon}{2}\, LW^\e +R^\e,
\end{equation}
where
$$
R^\e:=\sum_{j=1}^d \left(A_j(U)-A_j(U^\e) \right) \partial_{x_j} U +\frac{\e}{2}\, LU.
$$
Using Lemma \ref{lem1} we obtain, for
$s>2+d/2$,
\begin{equation}\label{R1}
\frac{\D}{\D t} \, \langle Q\partial_x^\alpha W^\e, \partial_x^\alpha W^\e \rangle  \leq
\mathcal{C}(N[W^\e])\|W^\e\|_s^2 +\langle Q\partial_x^\alpha R^\e, \partial_x^\alpha
W^\e \rangle, \quad |\alpha|\leq s,
\end{equation}
as well as
\begin{equation}\label{R2}
\begin{split}
\frac{\D}{\D t} \, \langle Q
\partial_x^\beta (x_jW^\e), \partial_x^\beta (x_jW^\e)\rangle  \leq & \
\mathcal{C}(N[W^\e])\left(\|W^\e\|_s^2+\|\, |x|W^\e\|^2_{s-1}\right) \\
& \ +\langle Q\partial_x^\beta (x_jR^\e), \partial_x^\beta (x_jW^\e)\rangle\;,
\end{split}
\end{equation}
for $|\beta|\leq s-1$.
Now we estimate terms involving $R^\e$  in (\ref{R1}) and
(\ref{R2}). This is done only for $t\in [0, \min\{T^*, T^\e\})$,
in which both $U^\e$ and $U$ are regular with uniform bounds for
$\mathcal N[U^\e]$ and $\mathcal N[U]$.

First we have
$$
\langle Q\partial_x^\alpha R^\e, \partial_x^\alpha W^\e \rangle \leq C {\|\partial_x^\alpha
W^\e\|}_0 \, {\|\partial_x^\alpha R^\e\|}_0
$$
and
$$
{\|\partial_x^\alpha R^\e\|}_0\leq \, C{\|\partial_{x_j} U\|}_s
{\|A_j(U^\e)-A_j(U)\|}_{|\alpha|}+C\epsilon {\|\partial_x^\alpha U\|}_0.
$$
Note that
$$
A_j(U^\e)-A_j(U)=(W^\e)_{j+2}{\bf I}_{d+2} + \tilde A,
$$
where the only non-zero entries of $\tilde A$ are $\tilde A_{1,
j+2}= \frac{1}{2}(W^\e)_1$, $\tilde A_{2, j+2}=\frac{1}{2}(W^\e)_2$ and
$$
\tilde A_{j+2, 1}=2f'(|a^\e|^2)a^\e-2f'(|a|^2)a, \quad \tilde
A_{j+2, 2}=2f'(|a^\e|^2)b^\e-2f'(|a|^2)b.
$$
Using these relations and the boundedness of $\|U\|_{s+1}$ we
conclude that
$$
{\|A_j(U^\e)-A_j(U)\|}_{|\alpha|} \leq C(\mathcal N[W^\e],
{\|U\|}_{s+1}) \, {\|W^\e\|}_s.
$$
Therefore
$$
\sum_{|\alpha|\leq s }\langle Q\partial_x^\alpha R^\e, \partial_x^\alpha W^\e \rangle \leq
C(\mathcal N[W^\e],{\|U\|}_{s+1}){\|W^\e\|}^2_s+ C\epsilon {\|U\|}_s {\|W^\e\|}_s.
$$
Further calculations give
\begin{equation*}
\begin{split}
\sum_{|\beta|\leq s-1 }\langle Q\partial_x^\beta(x_j R^\e), \partial_x^\beta(x_j W^\e)\rangle
\leq & \,  \mathcal{C}(\mathcal N[W^\e],{\|U\|}_{s}) \left(\|W^\e\|^2_s
+{\||x|W^\e\|}^2_{s-1}\right) \\
& \, + C\epsilon  {\|\, |x|W^\e\|}_{s-1}.
\end{split}
\end{equation*}
Substituting all these estimates into (\ref{R1}) and (\ref{R2})
and integrating over $\R^d$ we obtain
$$
\mathcal{E}[W^\e(t)]\leq \mathcal{E}[W^\e(0)]+C \int_0^t
\mathcal{C}(\mathcal N[W^\e], {\|U\|}_{s+1})\mathcal{E}[W^\e(\tau)] \, \D \tau+
C \e \int_0^t \mathcal N[W^\e(\tau)]\, \D \tau,
$$
where we have used the fact that $\mathcal{E}[W^\e]$ and
$\mathcal N^2[W^\e]$ are equivalent in the sense that there exists a
constant $C>0$ such that $C^{-1} \mathcal N^2[W^\e] \leq \mathcal{E}[W^\e]
\leq C \mathcal N^2[W^\e]$.
Note that $\mathcal{E}[W^\e(0)]=\O(\e^2)$, we thus have for $t\in
[0, T^* \wedge T]$
$$
\mathcal{E}[W^\e(t)]\leq C(T^*) \e^2 + c\int_0^t
\mathcal{C}(\mathcal N[W^\e], {\|U\|}_{s+1})\mathcal{E}[W^\e(\tau] \, \D \tau.
$$
We apply Gronwall's inequality to obtain
$$
\mathcal{E}[W^\e(t)] \leq C \e^2 \exp \left( c \int_0^t
\mathcal{C}(\mathcal N[W^\e], \|U\|_{s+1})\mathcal{E}[W^\e(\tau)] \, \D \tau
\right)=:\mathcal F(t).
$$
Thus, we infer
$$
\frac{\D}{\D t} \, \mathcal F(t) \leq C \mathcal{C}(\mathcal N[W^\e](t),
{\|U\|}_{s+1}(t)) \, \mathcal F(t)^2, \quad \mathcal F(0)=C\e^2.
$$
This differential inequality with $\O(\e^2)$ initial data ensures
that for fixed $T^*$, there exists $\e_0$  and $C^*$ such that for
$t \in [0, T^*]$ and $\e \leq \e_0$ we have
$$
\mathcal{E}[W^\e(t)]\leq \mathcal F(t) \leq (C^*\e)^2.
$$
This leads to
$$
{\|\, a^\e(t)-a(t) \,\|}_s=O(\e), \quad {\|\nabla_x \Phi^\e (t)-\nabla_x
\Phi (t)\|}_s=O(\e).
$$
The second estimate combined with the equations for $\Phi^\e$ and
$\Phi$ yields
$$
{\|\Phi^\e (t) -\Phi (t)\|}_s=O(\e)t
$$
and the proof is complete.
\end{proof}

Theorem \ref{pro2.2} yields an approximation of $\psi^\e $ \emph{for small times only} \cite{Car}, since
\begin{align*}
{\| \, \psi^\e (t) - a(t) \E^{\I \Phi(t) /\e} \, \|}_{L^2} =  & \, {\| \, a^\e (t)\E^{\I \Phi^\e(t) /\e} - a(t) \E^{\I \Phi(t) /\e} \|}_{L^2}\\
\leq & \,
{\| \, a^\e (t) - a(t) \, \|}_{L^2} + {\| \, \E^{\I \Phi(t) /\e} -  \E^{\I \Phi(t) /\e} \|}_{L^\infty} \, {\| \, a(t) \, \|}_{L^2} \\
\leq & \, \O(\e) + \O(t).
\end{align*}
In other words, to accurately approximate the wave function $\psi^\e$ itself one has to
take into account higher order \emph{corrections}. Indeed, it has been shown in \cite{Car} that
\begin{equation}\label{carles}
{\| \, \psi^\e (t) - a(t) \E^{ \I \Phi_1(t)} \E^{\I \Phi(t) /\e} \, \|}_{L^\infty((0,T^*];\, L^2(\R^d))}\leq \O(\e),
\end{equation}
where $\Phi_1$ is the first corrector appearing in the asymptotic expansion \eqref{exp}.
The slowly varying, phase $\Phi_1(t,x)$, for $t \in [0, T^*]$, is obtained from a linear hyperbolic system with source terms (and thus
will always be generated during the course of time). Consequently one might consider
${\tt a}:= a \E^{ \I \Phi_1}$ as a new (complex-valued) WKB amplitude.
Since in the present work we are mainly interested in deriving \eqref{super},
we shall not go into further details and rather refer to \cite{Car, Car1}. Note however, that in the
case where one aims to accurately describe the semi-classical dynamics of vortex states (see Remark \ref{rem})
it is \emph{crucial} to take into account this additional slowly varying phase $\Phi_1$. Even though the
fluid velocity in the classical limit is given by $\nabla_x \Phi$ as usually,
the total phase of the wave function in this
asymptotic regime is $\Phi + \e \Phi_1$. This $\O(\e)$-correction of the phase is usually ignored in the physics literature.
However, by doing so one can no longer justify a semi-classical approximation in the sense of \eqref{carles}.

With the above given result in hand, we are now able to show global-in-time convergence of the semi-classical limit.

\begin{Corollary} [Global convergence] \label{th2.2}
Under the same assumptions as before and for any $C_1$ satisfying
\begin{equation}\label{NN}
\mathcal N[U_0^\e]\leq C_0<C_1, \quad \mathcal N[U^\e(t)] \leq C_1 < C, \quad \mbox{for $t\in [0, \min\{T^*, T^\e \})$},
\end{equation}
it holds $T^\e(C_1)> T^*$ for $\e>0$ sufficiently small.
\end{Corollary}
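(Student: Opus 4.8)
The plan is a standard bootstrap (continuation) argument whose only real input is Theorem \ref{pro2.2}. Recall that $T^\e$ is by definition the largest time $\le\tau$ on which the WKB solution $U^\e$ of the symmetric hyperbolic system \eqref{hb} stays inside the ball $\{\mathcal N[\,\cdot\,]\le C_1\}$, and that, by Proposition \ref{pro2.1} together with the usual continuation principle for such systems, this solution in fact persists --- keeping the bound --- on the \emph{closed} interval $[0,T^\e]$, the map $t\mapsto\mathcal N[U^\e(t)]$ being continuous there. Everything below takes place on $[0,\min\{T^*,T^\e\})\subseteq[0,\tau)$, the common interval on which both the Hamilton--Jacobi phase $S$ of Lemma \ref{lem} and the limiting profile $U$ are available; note that $T^*<\tau$ is implicit in the statement, since one always has $T^\e\le\tau$.

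First I would apply Theorem \ref{pro2.2} on $[0,\min\{T^*,T^\e\})$. It yields ${\|a^\e-a\|}_s+{\|\nabla_x\Phi^\e-\nabla_x\Phi\|}_s=\O(\e)$, and --- reading off its proof, where $\mathcal E[U^\e-U]\le(C^*\e)^2$ and $\mathcal E$ is equivalent to $\mathcal N^2$ --- in fact $\mathcal N[U^\e-U]\le C^*\e$ uniformly on that interval, with $C^*$ depending only on $T^*$ and the finite norms of $U$ fixed by hypothesis. Put $M:=\sup_{t\in[0,T^*]}\mathcal N[U(t)]$, which is finite by the regularity assumed on $U$. Since $M<\infty$, the constant $C_1$ of \eqref{NN} may (and will) be chosen with $M<C_1$, while $\mathcal N[U^\e_{\rm in}]\le C_0<C_1$ is already part of \eqref{NN}. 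The triangle inequality then gives, for $t\in[0,\min\{T^*,T^\e\})$,
$$
\mathcal N[U^\e(t)]\le\mathcal N[U(t)]+C^*\e\le M+C^*\e.
$$
Hence, with $\e_0:=(C_1-M)/(2C^*)>0$, every $\e\le\e_0$ satisfies the \emph{strict} a priori bound $\mathcal N[U^\e(t)]\le(C_1+M)/2<C_1$ on all of $[0,\min\{T^*,T^\e\})$.

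It remains to upgrade this into $T^\e>T^*$. Suppose, to the contrary, that $T^\e\le T^*$. Then $\min\{T^*,T^\e\}=T^\e$, so $\mathcal N[U^\e(t)]\le(C_1+M)/2<C_1$ on $[0,T^\e)$, hence, by continuity, also at $t=T^\e$. Since $T^\e\le T^*<\tau$, applying the local existence statement of Proposition \ref{pro2.1} at time $T^\e$ with initial datum $U^\e(T^\e)$ (whose $\mathcal N$-norm is strictly below $C_1$) extends $U^\e$ to some interval $[0,T^\e+\delta]$, $\delta>0$; shrinking $\delta$ so that $T^\e+\delta\le\tau$ and using continuity of $\mathcal N[U^\e(\cdot)]$, the bound $\mathcal N[U^\e(t)]\le C_1$ still holds on $[0,T^\e+\delta]$. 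This contradicts the maximality built into the definition of $T^\e$. Therefore $T^\e(C_1)>T^*$ for all $\e\le\e_0$, which is the assertion.

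The single delicate point --- and the step I would handle most carefully --- is that the bootstrap is not circular and that the $\O(\e)$ estimate of Theorem \ref{pro2.2} is uniform right up to the moving endpoint $\min\{T^*,T^\e\}$: that theorem requires an a priori $\mathcal N$-bound for $U^\e$ \emph{only} on $[0,\min\{T^*,T^\e\})$, and precisely there such a bound ($\le C_1$) is available for free from the very definition of $T^\e$, so nothing is assumed that is not already known; extending the strict bound to the endpoint $t=T^\e$ is then a mere continuity statement for the $H^s$-valued solution. Everything else is routine hyperbolic continuation.
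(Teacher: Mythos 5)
Your proposal is correct and follows essentially the same route as the paper: invoke Theorem \ref{pro2.2} on $[0,\min\{T^*,T^\e\})$ to get the uniform $\O(\e)$ bound on $\mathcal N[U^\e-U]$, use the triangle inequality against $\sup_t\mathcal N[U(t)]<C_1$ to obtain a strict a priori bound below $C_1$, and then contradict the maximality of $T^\e$ via the local existence result. If anything, you are slightly more explicit than the paper about the need to choose $C_1$ strictly above $\sup_t\mathcal N[U(t)]$ and about continuity up to the endpoint, which the paper leaves implicit.
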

In case the hydrodynamic system \eqref{super} is proved to admit global solution, \ie
$T^*=\infty$, we have consequently established convergence of
solutions of \eqref{wkb} towards solutions of \eqref{super} globally in time.

\begin{proof} Assume the contrary of what we aim to prove, \ie assume that there is a $C_1$ satisfying
(\ref{NN}) and a sequence $\e_n \to 0$ as $n\to \infty$ and
$T^{\e_n}(C_1)\leq T^*$. Then there exists $\tilde C $ satisfying
$$
\mathcal N[U(t)]< \tilde C <C_1.
$$
From Theorem \ref{pro2.2} it follows that
$$
\mathcal N[U^\e(t)-U(t)]\leq C_*\e.
$$
Thus, there is a $n\in \N$ such that $\mathcal N[U^{\e_n}(t)]\leq \tilde C$ for
$t\in[0, T^{\e_n})$.  On the other hand, we have
$$
\mathcal N[U^{\e_n}(t)] \leq \mathcal N[U^{\e_n}(t)-U(t)]+\mathcal N[U(t)] \leq C_*\e_0 +\tilde C, \quad
t\in [0, T^{\e_n}). $$ The uniform bound of $\mathcal N[U^{\e_n}(t)]$  enables
us to apply the local existence result again to extend the solution
beyond $T^{\e_n}$. This contradicts the definition of
$T^{\e_n}(C_1)$. Thus the proof is complete.
\end{proof}

\begin{Remark}
Corollary \ref{th2.2} is an extension of the corresponding theorem in
\cite{LT:2002}.
\end{Remark}

\section{Rotational dynamics of semi-classical superfluids}
The study of superfluid dynamics in response to a rotational
forcing has been the subject of vast experimental and
theoretical work in the past years. The expectation value of the
angular momentum, \ie
\begin{equation}\label{me}
    m^\e(t):=\I \epsilon \int_{\R^d} \overline{ \psi^\e}(t, x)\, x^\bot\cdot
    \nabla_x \psi^\e(t,x)\, \D x,
\end{equation}
has been mainly used to describe the dynamics
\cite{BDZ:2006}. In particular, a nonzero value of $m^\e(t)$
signifies the \emph{vortex nucleation} in BEC experiments.
For a condensate wave function of the form $\psi^\e (t,x) =a^\e (t,x)
\E^{\I \Phi^\e (t,x)/\e}$ we obtain
\begin{align*}
m^\e(t)& =-\int_{\R^d} |a^\e|^2x^\bot\cdot
    \nabla_x \Phi^\e(t,x)\D x + \I \epsilon \int_{\R^d} \bar a^\e x^\bot\cdot
    \nabla_x a^\e \, \D x\\
    & =-\int_{\R^d} |a|^2 x^\bot\cdot
    \nabla_x \Phi (t,x)\, \D x +\O(\e).
\end{align*}
In the following we denote the leading order angular momentum by
\begin{equation}
m(t):=-\int_{\R^d} \rho(t,x) \, x^\bot\cdot
    \v (t,x)\, \D x \, ,
\end{equation}
where, as before, $(\rho, \v)=(|a|^2,
\nabla_x \Phi)$. We shall also use the notation
$$ {\langle g \rangle}_{\rho (t)}
:=\int_{\R^d} g(x) \rho (t,x) \, \D x,
$$
for any smooth function $g(x)$.

\begin{Corollary}\label{th2.3}
Let $f(z) =z$ and impose the same assumptions as before. Then, as $\e \to 0$ it holds
\begin{equation}\label{am}
m^\e(t)=m(0)+\frac{\Omega}{2}\left( \langle |x|^2\rangle_{\rho(t)}-
\langle |x|^2\rangle_{\rho_{\rm in}}\right)+ \O(\e).
\end{equation}
Moreover we have
\begin{equation}\label{amp}
\frac{\rm d}{{\rm d} t} m^\e(t)=\Omega \langle x\cdot \v \rangle_{\rho (t)} +
\frac{\delta}{2\omega^2_{\bot}} \langle x_1x_2 \rangle_{\rho (t)}  +\O(\e),
\end{equation}
where $\delta
=\frac{\omega_{1}^2-\omega_{2}^2}{\omega_{1}^2+\omega_{2}^2}$ denotes the trap deformation and
$\omega^2_{\bot}=\frac{1}{2}(\omega_{1}^2+\omega_{2}^2)$ the radial frequency.
\end{Corollary}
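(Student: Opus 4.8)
The plan is to transfer the whole statement to the limiting hydrodynamical system \eqref{super} and then extract \eqref{am}--\eqref{amp} from elementary moment computations. The reduction goes as follows. By Corollary \ref{th2.2}, for $\e$ small enough the modified WKB form $\psi^\e = a^\e \E^{\I \Phi^\e/\e}$ survives on all of $[0,T^*]$, and Theorem \ref{pro2.2} gives $\|a^\e - a\|_s = \O(\e)$, $\|\nabla_x \Phi^\e - \nabla_x \Phi\|_s = \O(\e)$ together with the weighted bound $\mathcal N[U^\e - U] = \O(\e)$, uniformly on $[0,T^*]$. Inserting $\psi^\e = a^\e \E^{\I \Phi^\e/\e}$ into \eqref{me} exactly as in the computation preceding the statement yields $m^\e(t) = -\int_{\R^d} \rho^\e\, x^\bot \cdot \nabla_x \Phi^\e\, \D x + \O(\e)$; and since the weighted part of $\mathcal N[\cdot]$ dominates $\||x|a^\e\|_{s-1}$, the same estimates give $\langle g \rangle_{\rho^\e(t)} = \langle g \rangle_{\rho(t)} + \O(\e)$ for every polynomial $g$ with $\deg g \le 2$, and $\int_{\R^d}\rho^\e\, x\cdot\nabla_x\Phi^\e\,\D x = \langle x\cdot\v\rangle_{\rho(t)} + \O(\e)$, all uniformly in $t$. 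It thus suffices to prove \eqref{am}--\eqref{amp} with $m^\e$ and $\rho^\e$ replaced by their limits $m$ and $\rho$, modulo $\O(\e)$.

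To get \eqref{amp} I would differentiate $m(t) = -\langle x^\bot \cdot \v \rangle_{\rho(t)}$ in $t$ and substitute the two equations of \eqref{super}. Using $\v = \nabla_x \Phi$ (which yields the identity $x^\bot \cdot \nabla_x(\tfrac12 |\v|^2) = \v \cdot \nabla_x(x^\bot \cdot \v)$ and $\curl_x \v = 0$), the fact $\nabla_x \cdot x^\bot = 0$, and one integration by parts, the advective and kinetic contributions recombine; the self-interaction term reduces to $\int_{\R^d} \rho\, x^\bot \cdot \nabla_x f(\rho)\, \D x$, which is a perfect $x^\bot \cdot \nabla_x$-derivative and hence vanishes — this is the place where the hypothesis $f(z) = z$ enters concretely. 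What survives is the rotational drift $\Omega \langle x \cdot \v \rangle_{\rho(t)}$ carried by the transport velocity $\v - \Omega x^\bot$ in the continuity equation, plus the trap term $-\tfrac12 \langle x^\bot \cdot \nabla_x V \rangle_{\rho(t)}$; for the harmonic potential with frequencies $\omega_1, \omega_2$ along the first two axes, $x^\bot \cdot \nabla_x V$ is proportional to $x_1 x_2$, which is exactly the $\langle x_1 x_2 \rangle_{\rho(t)}$-term of \eqref{amp} in the stated normalization. Adding back the $\O(\e)$ error gives \eqref{amp}. (Equivalently one can work directly at the $\psi^\e$-level with the Ehrenfest identity for $\tfrac{\D}{\D t} m^\e$ obtained from \eqref{me} and \eqref{se}, using that $-\tfrac{\e^2}{2}\Delta$ commutes with $x^\bot \cdot \nabla_x$ and that the contribution of the real multiplier $f(|\psi^\e|^2)$ is again a perfect $x^\bot \cdot \nabla_x$-derivative, so that only the potential and rotational terms remain.)

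Then \eqref{am} follows by integrating \eqref{amp} from $0$ to $t$: testing the continuity equation $\partial_t \rho + \nabla_x \cdot \big(\rho(\v - \Omega x^\bot)\big) = 0$ against $|x|^2$ gives
$$
\frac{\D}{\D t} \langle |x|^2 \rangle_{\rho(t)} = 2\langle x \cdot \v \rangle_{\rho(t)} - 2\Omega \langle x \cdot x^\bot \rangle_{\rho(t)} = 2\langle x \cdot \v \rangle_{\rho(t)},
$$
since $x \cdot x^\bot \equiv 0$, whence $\int_0^t \Omega \langle x \cdot \v \rangle_{\rho(s)}\, \D s = \tfrac{\Omega}{2}\big( \langle |x|^2 \rangle_{\rho(t)} - \langle |x|^2 \rangle_{\rho_{\rm in}} \big)$; combining this with $m^\e(0) = m(0) + \O(\e)$ and the uniform $\O(\e)$-bounds yields \eqref{am}.

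The genuine difficulty is concentrated in the reduction step. Theorem \ref{pro2.2} controls $a^\e - a$ and $\nabla_x \Phi^\e - \nabla_x \Phi$ only in $H^s$ and in $\mathcal N[\cdot]$, whereas \eqref{am}--\eqref{amp} involve the \emph{second} moments $\langle |x|^2 \rangle$, $\langle x_1 x_2 \rangle$, $\langle x \cdot \v \rangle$; one must verify that the weighted bound $\||x|U^\e\|_{s-1} \in L^\infty_t$ — available precisely because $s > 2 + d/2$ forces $s-1 > 1 + d/2$ — makes these moments finite, $\O(\e)$-close to their limits, and uniform on $[0,T^*]$, and one must justify the integrations by parts (sufficient decay of $|x|^2 \rho^\e$ and $|x|^3 \rho^\e$ at infinity), say by approximation. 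It is also cleaner not to differentiate $m^\e = m + \O(\e)$ termwise — the remainder need not have an $\O(\e)$ derivative — but to pass to the limit in the exact rate identity for $\tfrac{\D}{\D t} m^\e$, or equivalently to integrate first and then let $\e \to 0$ in $m^\e(t) - m^\e(0)$. Once these weighted-moment and limit-interchange points are settled, the remaining algebra is routine.
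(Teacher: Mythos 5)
Your proposal is correct and follows essentially the same route as the paper: differentiate $m(t)=-\langle x^\bot\cdot \v\rangle_{\rho(t)}$ along the hydrodynamical system \eqref{super} to obtain $\tfrac{\D}{\D t}m(t)=\Omega\, n(t)+\int_{\R^d}\rho\,(x^\bot\cdot\nabla_x)V\,\D x$ with $n(t)=\langle x\cdot\v\rangle_{\rho(t)}$, identify the potential term as the $x_1x_2$-moment for the anisotropic quadratic trap, and recover \eqref{am} by integrating in time via $\tfrac{\D}{\D t}\langle |x|^2\rangle_{\rho(t)}=2n(t)$, after first reducing $m^\e$ to $m$ through the $\O(\e)$-convergence of Theorem \ref{pro2.2} and Corollary \ref{th2.2}. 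The additional care you devote to the weighted second moments and to justifying the integrations by parts goes beyond the paper's very terse computation but does not change the substance (and, as a minor point, the vanishing of the self-interaction contribution $\int\rho\, x^\bot\cdot\nabla_x f(\rho)\,\D x$ holds for general $f$, not only $f(z)=z$).
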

In the semi-classical regime, the angular momentum expectation value
is dominated by the classical rotational effect, in contrast to the results of \cite{BDZ:2006, BKJZ}
(where the GPE is considered unscaled). The result also shows that a non-isotropic frequency for
the trapping potential contributes to the change of the angular momentum.

\begin{proof}
Let
$$
n(t):=\int_{\R^d} \rho(t,x) x \cdot \v(t,x) \, \D x
$$
and we also introduce
$$
X^\e(t):=\int_{\R^d} |x|^2 |\psi^\e(t,x)|^2\D x = {\langle |x|^2 \rangle}_{\rho(t)}+ \O(\e),
$$
A straightforward calculation calculation then yields
\begin{equation}\label{dm}
\frac{\D }{\D t} \, m(t) = \Omega n(t) +\int_{\R^d} \rho (x^\bot\cdot
\nabla_x )V \, \D x.
\end{equation}
When combined with a quadratic potential of the form
$V=\frac{1}{2}|\omega \cdot x|^2$, this leads to the relation (\ref{amp}).
Also, one easily verifies that
$$
\frac{\D }{\D t} \, {\langle |x|^2 \rangle}_{\rho(t)} =2 n(t).
$$
which, upon inserting into (\ref{dm}), yields
(\ref{am}) after an integration w.r.t. time.
\end{proof}
We also remark that the change of $n(t)$ also depends on $m(t)$ as
well as the total energy. This can be seen as follows: First we calculate
$$
\frac{\D }{\D t} \, n(t)=-\int_{\R^d} \rho(x\cdot \nabla_x )(V(x)
+f(\rho)) \, \D x+ \int_{\R^d}\rho\v^2\, \D x
$$
and an integration by parts gives
$$
\frac{\D }{\D t} \, n(t)=\int_{\R^d}(  \rho^2 +\rho \v^2
) \, \D x -\int_{\R^d} \rho (x\cdot \nabla_x )V(x) \, \D x.
$$
Note that for $f(z)=z$, the total energy (\ref{energy}) expressed in terms of $a$ and $\v$ reads
$$
E=\int_{\R^d} \frac{1}{2}\rho |\v -\Omega
x^\bot |^2  +\rho \left(V-\frac{\Omega^2}{2}(x_1^2+x_2^2)\right)
+\frac{1}{2}\rho^2 \, \D x +\O(\e).
$$
For quadratic potentials $V=\frac{1}{2}|\omega \cdot x|^2$, it holds
$(x\cdot \nabla_x )V(x)=2V(x)$, which consequently implies
\begin{align*}
\frac{\D }{\D t} \, n(t)& =2\left( E-\Omega m(t) -\int_{\R^d} \rho V(x) \, \D x
\right)
-\int_{\R^d} \rho (x\cdot \nabla_x )V(x)\, \D x\\
& =2(E-\Omega m(t)) -4 \int_{\R^d} \rho V(x)\, \D x.
\end{align*}
Combining these calculations yields the following closed system
\begin{equation*}
\left \{
\begin{aligned}
\frac{\D }{\D t} \, n(t) = & \, 2\left(E(0)-\Omega m(t) \right)- 2
\int_{\R^d} |\omega \cdot x|^2 \rho \, \D x,\\
\frac{\D }{\D t}\, m(t)=& \, \Omega n(t)+(\omega^2_{x_1}
-\omega^2_{x_2})\int_{\R^d}x_1x_2\rho \, \D x.
\end{aligned}
\right.
\end{equation*}
Finally, we note that for an \emph{isotropic} harmonic confinement, \ie $V=\frac{1}{2}\omega^2|x|^2$, we have
$$
\frac{\D }{\D t} \, m(t) -\Omega n(t)=0, \quad \frac{\D }{\D t} \, n(t) +
2\Omega m(t)=2E(0)-2\omega^2 {\langle |x|^2 \rangle}_{\rho(t)}.
$$
Since in this case, it also holds
$$
\frac{\D }{\D t} \, X=2n(t)=\frac{2}{\Omega}\frac{\D }{\D t} \, m(t)
$$
and we simply obtain
$$
m(t)=\frac{\Omega}{2}X(t) +m(0)-\frac{\Omega}{2}{\langle |x|^2 \rangle}_{\rho_{\rm in}}.
$$
The obtained relations yield a closed equation for $m(t)$, \ie
$$
\frac{\D^2 }{\D t^2} \, m(t) +(2\Omega^2+4\omega^2)m(t)=2\Omega E(0)+4m(0)\omega^2
-2\Omega \omega^2{\langle |x|^2 \rangle}_{\rho_{\rm in}}.
$$
The general solution of it can be written as
$$
m=C_1 \cos \sqrt{4\omega^2
+2\Omega^2}t+C_2 \sin\sqrt{4\omega^2+2\Omega^2}t + \frac{\Omega
E(0)+ \Omega \omega^2 (2m(0)-{\langle |x|^2 \rangle}_{\rho_{\rm in}}) }{\Omega^2 +2\omega^2},
$$
where $C_1$ and $C_2$ are determined by $m(0)$ and $\frac{\rm d}{{\rm d} t}
m(t)\Big |_{t=0}=\Omega n(0)$. The above given calculations could be used to compare
numerical simulations of the full NLS dynamics in the spirit of \cite{BDZ:2006, BKJZ}.

\bigskip

{\bf Acknowledgment.} The authors thank R. Carles for helpful discussions. 
H. Liu wants to thank the WPI (Vienna) for its hospitality and support during his visit in May 2006 
when this work was initiated.

\end{document}